\newcommand{\la}{\lambda}
\newcommand{\al}{\alpha}
\newcommand{\e}{\varepsilon}
\newcommand{\Q}{\mathbb{Q}}
\newcommand{\ov}{\overline}
\newcommand{\gl}{\mathfrak{g}}
\newcommand{\s}{\mathfrak{s}}
\renewcommand{\l}{\mathfrak{l}}
\renewcommand{\t}{\mathfrak{t}}
\newcommand{\F}{\mathbb F}
\newcommand{\Z}{\mathbb Z}
\renewcommand{\i}{\iota}
\renewcommand{\la}{\langle}
\newcommand{\ra}{\rangle}
\DeclareMathOperator{\codim}{codim}
\DeclareMathOperator{\support}{supp}
\DeclareMathOperator{\rank}{rank}
\DeclareMathOperator{\sgn}{sgn}
\DeclareMathOperator{\uni}{uni}
\DeclareMathOperator{\nil}{nil}
\DeclareMathOperator{\Irr}{Irr}
\DeclareMathOperator{\Hom}{Hom}
\DeclareMathOperator{\End}{End}
\DeclareMathOperator{\Ad}{Ad}
\DeclareMathOperator{\Lie}{Lie}
\DeclareMathOperator{\cl}{cl}
\DeclareMathOperator{\Tr}{Tr}
\DeclareMathOperator{\Sn}{S}
\DeclareMathOperator{\GL}{GL}
\DeclareMathOperator{\SL}{SL}
\DeclareMathOperator{\GU}{GU}
\newtheorem{theorem}{Theorem}[section]
\newtheorem{proposition}[theorem]{Proposition}
\newtheorem{lemma}[theorem]{Lemma}
\newtheorem{rmk}[theorem]{Remark}
\begin{document}

\title[On the endomorphism algebra of GGGRs]{On the endomorphism algebra of generalised Gelfand-Graev representations}

\author{MATTHEW C. CLARKE}

\address{Trinity College,
Cambridge, CB\textup{2 1}TQ, UK}

\email{\texttt{m.clarke@dpmms.cam.ac.uk}}

\thispagestyle{empty}

\begin{abstract}

Let $G$ be a connected reductive algebraic group defined over the finite field $\F_q$, where $q$ is a power of a good prime for $G$, and let $F$ denote the corresponding Frobenius endomorphism, so that $G^F$ is a finite reductive group. Let $u \in G^F$ be a unipotent element and let $\Gamma_u$ be the associated generalised Gelfand-Graev representation of $G^F$. Under the assumption that $G$ has a connected centre, we show that the dimension of the endomorphism algebra of  $\Gamma_u$ is a polynomial in $q$, with degree given by $\dim C_G(u)$. When the centre of $G$ is disconnected, it is impossible, in general, to parametrise the (isomorphism classes of) generalised Gelfand-Graev representations independently of $q$, unless one adopts a convention of considering separately various congruence classes of $q$. Subject to such a convention we extend our result.

\end{abstract}

\maketitle

% mention endo of ordinary GGR

% name convention

% lusztig: is it q or p that's sufficiently large?

\section{Introduction}  \label{intro}

%Let $G$ be a connected reductive algebraic group defined over the finite field with $q$ elements (where $q$ is a power of some prime number $p$). Denote the corresponding Frobenius endomorphism on $G$ by $F$. DON'T NEED TO SAY THIS AGAIN IF IT IS IN THE ABSTRACT DO WE? Ask Jan!

In \cite{Kawa} N. Kawanaka has associated a generalised Gelfand-Graev representation to each unipotent class of a finite reductive group $G^F$. These representations have deep connections with the geometry of the unipotent classes of $G$. In \cite{Kawa} Kawanaka has also obtained a formula for their characters in terms of Green polynomials in the case of general linear and unitary groups. Inspired by this, G. Lusztig (\cite{Lus}) has obtained a similar formula, valid for an arbitrary finite reductive group (with $p$ sufficiently large), expressed in terms of intersection cohomology complexes of closures of unipotent classes with coefficients in various local systems. By considering the inner product of generalised Gelfand-Graev characters (with themselves) using these formulas we thus obtain expressions for the dimension of the associated generalised Gelfand-Graev modules.

Naturally, the prime power $q$ features heavily in these formulas and in such situations it is useful to think of $q$ as a variable. However care is needed in order to formulate precise and meaningful statements. This notion is central in the theory of finite reductive groups as it allows generic behaviour to be observed, i.e. behaviour which is independent of the associated finite field. Recently S. Goodwin and G. R\"{o}hrle have written two papers (\cite{GoRo2}, \cite{GoRo}) in which the notion of polynomials in $q$ is used extensively, and the set-up in the present paper is inspired by the set-up in those papers.

Under a certain assumption on the root datum of $G$ (ensuring that the centre of $G$, together with all groups with the same root datum but with a possibly different associated prime power $q$, are connected), we shall explain what we mean by the statement ``the dimension of a generalised Gelfand-Graev module is a polynomial in $q$'', before proving it. We will also show that the degree of this polynomial is given by the dimension of the centraliser (in $G$) of a unipotent element in $G^F$ from the class associated with that module. (For groups with a component of Type $E_8$ we may need to dichotomise our set-up, depending on the value of $q$ modulo $3$.) When $G$ has a disconnected centre we cannot even parametrise the generalised Gelfand-Graev characters independently of $q$ in general, and so we cannot hope for such a clean statement here. However, similar behaviour is exhibited in the disconnected centre case, and we have found that a suitable way to capture this is to consider $q$ as a variable, but whilst only allowing certain congruence classes of $q$. Subject to this restriction we extend the aforementioned results to this situation too.

In \cite{St} R. Steinberg has shown that (ordinary) Gelfand-Graev representations are multiplicity-free (although this was proved previously for split groups by T. Yokonuma in \cite{Yok1}, \cite{Yok2}), and that they contain $|Z(G)^F|q^l$ irreducible constituents. It thus follows that this number is the dimension of their endomorphism algebras. When the centre of $G$ is connected this number may be viewed as a polynomial in $q$ of degree $\rank G$, the latter agreeing with the dimension of the centraliser of a regular unipotent element. One could view the results of this paper as a generalisation of this fact to generalised Gelfand-Graev representations.

This paper is organised as follows. In the remainder of this section we lay down the rigorous foundation necessary to formulate precise statements involving polynomials in $q$. In Section \ref{Kawanaka} we prove our main result in the special case of general linear and unitary groups, using Kawanaka's character formula. In Section \ref{LusSec} we extend this to a much more general setting, using Lusztig's character formula. Although Section \ref{Kawanaka} is essentially a special case of Section \ref{LusSec}, it is considerably easier to understand, conceptually, the ingredients of the former, and may serve to illuminate the latter.

We let $G_{\uni}$ denote the unipotent variety of $G$, and for $\gl=\Lie(G)$, we let $\gl_{\nil}$ denote the nilpotent variety of $\gl$. Throughout, for $u \in G_{\uni}^F$, we denote by $\Gamma_u$ the corresponding generalised Gelfand-Graev representation, and the lower case $\gamma_u$ for its character. All representations will be over $\bar{\Q}_l$.

The author is grateful to M. Geck for suggesting the problem which this paper addresses and for many useful discussions. (The problem was suggested to him previously by A. Premet.) He also thanks Aberdeen University for its hospitality during his visit there in 2009, which was supported by the EPSRC Network Grant EP/F029381/1.

\subsection{Polynomials in $q$}

Recall that a connected reductive algebraic group is uniquely determined by its root datum $(X,Y,\Phi,\check{\Phi})$, and that a related finite reductive group is uniquely determined by the following.
\begin{enumerate}
	\item A root datum $(X,Y,\Phi,\check{\Phi})$.
	\item A prime power $q$.
	\item An automorphism $F_0$ of the Dynkin diagram associated with \\$(X,Y,\Phi,\check{\Phi})$.
\end{enumerate}

% cf. Geck's "`on the average vaules..."' 

Now suppose we have a quantity attached to a fixed choice of data 1 and 3 above, which is a function of various prime powers $q$. If there exists a polynomial $f \in \Q[x]$ such that the quantity is given by $f(q)$ for those $q$ under consideration, then we say that this quantity is a {\em polynomial in $q$}. In the same spirit, we may also talk about quantities which are {\em independent of $q$}. When we write $G$ we will sometimes mean a fixed group with an associated fixed prime power $q$ (sometimes the notation $G(q)$ is used), but sometimes, by abuse, we will talk about properties of $G$ which are independent of $q$, in which case we are, strictly speaking, referring to properties of the root datum, together with $F_0$.

% Q vs Z? GoRo only use Q though.

For the statement ``$\dim \End_{\bar{\Q}_lG^F} \Gamma_u$ is a polynomial in $q$'' to be meaningful, we first need to establish that the number of distinct generalised Gelfand-Graev characters is fixed as we vary $q$, and that they can be parametrised independently of $q$. But since the generalised Gelfand-Graev characters are parametrised by the unipotent classes of $G^F$ we shall focus on the latter. Clearly we will need to fix, once and for all, a root datum $(X,Y,\Phi,\check{\Phi})$, and automorphism $F_0$.

Before we begin the discussion of unipotent classes we fix some data which will play a part both in the current situation and later on when we will wish to compare $G^F$-classes of certain rational subgroups of $G$, across different values of $q$. We fix a maximally split maximal torus $T \le G$ for each prime power, and a simple system $\pi \subset \Phi$ such that $F_0(\pi) = \pi$. Then this uniquely determines a rational Borel subgroup $B \supset T$. % this does give rise to a canonical idendification of the various W= W(q) doesn't it? I.e. via the autom group of the root system, which is defined indep of q. Might be a check good excercise to familiarise myself with exactly how this works.

% T unique up to action by G^F (see p84, Car). Maybe say that our results are not sensitive to the choice of T or \pi, which is why they do not feature in the statements of theorems etc. (Presumably the polys COULD be different if we'd started with a non-maximally split max torus, but the fact that they are polys and the degrees would still be the same.)

% We also need to mention that in order to view the orders of certain subgroups of G^F as polys in q too there needs to be a natural param. But perhaps this is best mentioned when they are introduced later ("`by the fixed choice of $T$ and $\pi$ in Section ??"')

In order to parametrise geometric unipotent classes and $F$-stable geometric unipotent classes independently of $q$ we apply an idea of N. Spaltenstein of using a group $G'$ with the desired root datum as $G$ but over a field of characteristic zero, as a reference point. We let $T'$ be a maximal torus of $G'$ and $B'$ a Borel subgroup of $G'$ containing $T'$. For this discussion we restrict our attention to $q$ which are powers of a good prime for our fixed root datum. (Recall that this restriction is also necessary to define generalised Gelfand-Graev representations.) Let $X_G$ and $X_{G'}$ denote the set of geometric unipotent classes of $G$ and $G'$ respectively. Then Spaltenstein has shown (\cite{Spa}, Th\'{e}or\`{e}me III.5.2) that there exists a map $\pi_G : X_{G'} \rightarrow X_{G}$, which is characterised by the following three properties:

\begin{itemize}
	\item It is an isomorphism of posets. 
	
	\item It preserves the dimensions of classes.
	
	\item It satisfies certain compatibility relations between parabolic subgroups in $G$ and $G'$ containing $B$ and $B'$ respectively.
\end{itemize}

Since this map is uniquely defined we may use it as a means of parametrising the geometric unipotent classes of $G$ independently of $q$. Now the Frobenius endomorphism $F$ on $G$ may be written as $F=F_q \circ F_0 = F_0 \circ F_q$ where $F_q$ is determined by the multiplication by $q$ map on the character group of $T$, whilst $F_0$ is an automorphism of $G$ of finite order, determined uniquely by the automorphism on $\Phi$ that we fixed previously. Then $F_0$ determines also a map $F_0'$ on $G'$ in the same manner. Clearly the maps $F$ and $F_0'$ induce permutations on $X_G$ and $X_{G'}$ respectively and, moreover, the following diagram commutes:

\[\begin{CD}
X_{G'} @>\pi_G >> X_G\\
@VV{F_0'}V @VV{F}V\\
X_{G'} @>\pi_G >> X_G
\end{CD}\]

% depending on what we say about the S.C. later it may be worthwhile offering a two line proof of this. Wait to see how Lusztig section turns out first.

\noindent (The only non-trivial thing to show here is that $F_q$ acts trivially on $X_G$, a fact which follows from the Springer correspondence (see also \cite[p. 24]{GeMa}).) It follows that our parametrisation of geometric unipotent classes respects the $F$-action, and thus we have a $q$-independent parametrisation of $F$-stable unipotent classes of $G$.

We now turn our attention to the unipotent classes of $G^F$. For this we will need to assume that $X/\Z\Phi$ is torsion-free. Let $C$ be an $F$-stable unipotent class in $G$ and fix an $F$-stable point $u \in C$. Then, by the Lang-Steinberg theorem, the $G^F$-orbits in $C^F$ are parametrised by the $F$-conjugacy classes of $A(u) = C_G(u)/C^{\circ}_G(u)$. Explicitly, this is done as follows: For each $F$-conjugacy class in $A(u)$, choose a representative $a$; then choose $g_a \in $G such that $g_a^{-1}F(g_a) = \dot{a}$ for some representative $\dot{a}$ of $a$ in $C_G(u)$; then $\{ g_aug_a^{-1} \}$ is a set of representatives for the $G^F$-conjugacy classes in $C^F$.

The next step is to make a special choice for $u$ for which the $F$-action on $A(u)$ is as simple as possible and so that we have a more canonical reference point in $C$ for when we vary $q$. Thankfully, this is possible by an idea of T. Shoji. If $G$ is simple modulo its centre and $\Phi$ is not of type $E_8$ then $u$ may be chosen to be a so-called {\em split element} (proved in \cite{Sho5}, \cite{Sho9}, and \cite{BeSp}; see also the survey \cite{Sho3}, \S 5). We omit the definition here, but suffice to say that split elements in $C$ comprise a unique $G^F$-conjugacy class and, if $u$ is a split element, $F$ acts trivially on $A(u)$. Thus, the $G^F$ classes in $C^F$ correspond canonically to the conjugacy classes of $A(u)$. In fact, suppose $u' \in G'$ is a unipotent element whose $G'$-class agrees with the $G$-class of $u$ under Spaltenstein's map, then we may write down an explicit bijection between the conjugacy classes of $A(u)$ and $A(u')$. (See \cite{McnSo}, p. 336, for the details of this bijection, although this is based on earlier work in \cite{Miz} and \cite{Ale}.) In this manner we can label the unipotent classes, and hence the generalised Gelfand-Graev characters, of $G^F$ independently of $q$.

We now consider groups of type $E_8$. Following \cite{Kawa3}, Proposition 1.2.1, we dichotomise the situation according to whether $q$ is congruent to $1$ or $-1$ modulo $3$. (Note that we do not consider the case where $q$ is a power of $3$ since this is a bad prime for $E_8$.) From now on, when we refer to ``polynomials in $q$'' or ``treating $q$ as a variable'' we tacitly assume that we have fixed one or the other of these situations. In fact we only really need this distinction when $u$ is in the geometric unipotent class with Bala-Carter label $E_8(b_6)$ (corresponding to Mizuno label $D_8(a_3)$). In this case there is a $G^F$-class of split elements if $q$ is congruent to $1$ modulo $3$ (and hence the $G^F$-classes may be parametrised independently of $q$ as before), but split elements do not exist in this class if $q$ is congruent to $-1$ modulo $3$. However, it is possible to deal with this case explicitly. (E.g. it is known that there are precisely three $G^F$-classes and that their class sizes are given by different polynomials so we could, for instance, label these classes by these known polynomials.) This distinction is also implicitly used in Section \ref{LusSec} since Green functions, which appear there, have an analogous $q$ dependence issue for groups of type $E_8$.

% if the referee wants a reference, go back to Frank Lubecks page.

% maybe mention this in the earlier synopsis

% Kawa does this for large p, but says that all the results in that section are still valid for good p; refers to part II

% this dichotomy will, presumably, be important later since Lusztig's formula uses Green functions, which are different depending on the modulus of q mod 3.

By a well-known process of reduction in the theory of unipotent classes of reductive groups we may also lift the assumption that $G$ be simple modulo its centre. (See the standard text of Carter \cite{Car2}, Ch. 5, for a general treatment, and \cite{GoRo}, p. 7, for a discussion relevant to the current context.)

In summary, we have the following:

% Reference Sho3 may not be sufficient to cover split groups. In \cite{Sho5}, Shoji extends the notion of split elements to non-split groups. GoRo seem to think this is works though??

% Wait till we hear back from Shoji before proceeding with non-split case. It should be helpful to offer more precise references if this can be done, esp. since the status quo seems to involve a lot of hand-waving about the non-split case.

\begin{proposition} \label{GGGRparam}

Fix a root datum $(X,Y,\Phi,\check{\Phi})$ and automorphism $F_0$ of the Dynkin diagram associated with $(X,Y,\Phi,\check{\Phi})$, and assume that $X/\Z \Phi$ is torsion-free. Then we may parametrise the unipotent classes (and therefore the generalized Gelfand-Graev characters) of all finite reductive groups which have this data independently of the associated prime power $q$, provided $q$ is a power of a good prime. 

\end{proposition}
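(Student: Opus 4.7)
The proof essentially assembles the ingredients developed in the preceding subsection into a formal statement, so the plan is to organize those pieces in the correct order and indicate where subtleties enter.

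First, I would parametrise the geometric unipotent classes of $G$ via Spaltenstein's map $\pi_G : X_{G'} \rightarrow X_{G}$, where $G'$ is the reference group in characteristic zero sharing the fixed root datum. Since $\pi_G$ is a uniquely determined isomorphism of posets (this is where the good-prime hypothesis is used), the set $X_{G'}$ provides a $q$-independent labelling of $X_G$. Next, to descend to $F$-stable geometric classes, I would invoke the commutative diagram relating $\pi_G$, $F$ on $X_G$, and $F_0'$ on $X_{G'}$; the key point is that $F_q$ acts trivially on $X_G$ (by the Springer correspondence), so $F$-stability on $X_G$ corresponds to $F_0'$-stability on $X_{G'}$, giving a $q$-independent parametrisation of $F$-stable geometric unipotent classes.

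Second, for each $F$-stable geometric class $C$, I would parametrise the $G^F$-classes inside $C^F$. By Lang--Steinberg these are in bijection with the $F$-conjugacy classes of $A(u) = C_G(u)/C_G^\circ(u)$ for any $F$-stable $u \in C$. To make this bijection $q$-independent, I would choose $u$ to be a split element in the sense of Shoji (here the hypothesis that $X/\Z\Phi$ is torsion-free enters, together with the results of Shoji, Beynon--Spaltenstein); then $F$ acts trivially on $A(u)$, so the $F$-conjugacy classes coincide with ordinary conjugacy classes. Finally I would invoke the Mizuno/Alekseevsky/McNinch--Sommers bijection between the conjugacy classes of $A(u)$ and those of $A(u')$ for the corresponding $u' \in G'$, transporting the labelling back to the reference group.

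The main obstacle is the $E_8$ case, where split elements may fail to exist: specifically the class with Bala--Carter label $E_8(b_6)$ has a $G^F$-class of split elements precisely when $q \equiv 1 \pmod 3$. I would handle this by the dichotomy announced in the paper, treating $q \equiv 1$ and $q \equiv -1$ modulo $3$ as two separate cases; in the exceptional case, the three $G^F$-classes can be distinguished by their class sizes, which are distinct polynomials in $q$, so an explicit $q$-independent labelling is still available under the chosen congruence convention.

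Finally, I would remove the simplifying hypothesis that $G$ be simple modulo its centre by the standard reduction in unipotent class theory (as summarised in Carter, and in the Goodwin--Röhrle discussion referenced in the introduction): unipotent classes of a general connected reductive $G$ are controlled by those of its simple components, so the independent parametrisations glue together, yielding the proposition.
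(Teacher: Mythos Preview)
Your proposal is correct and follows essentially the same approach as the paper: indeed, the proposition is stated in the paper as a summary of the preceding discussion, and your outline reproduces that discussion step for step (Spaltenstein's map for geometric classes, the commuting square to pass to $F$-stable classes, Lang--Steinberg plus split elements plus the McNinch--Sommers bijection for $G^F$-classes, the $E_8$ dichotomy, and the reduction to simple-modulo-centre). One minor remark: the torsion-free hypothesis is not what guarantees the existence of split elements (those exist even with disconnected centre, as the paper notes later); rather it is what ensures that the component groups $A(u)$ and their conjugacy classes are uniformly identifiable across all $q$, which is what makes the final labelling $q$-independent.
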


We will also make use of the following, sometimes implicitly.

\begin{proposition} \label{centralisersarepoly}

With the set-up of Proposition \ref{GGGRparam}, let $R$ be a set of $q$-independent labels for the unipotent classes of these groups. For each power $q$ of a good prime and each $r \in R$, let $u_{r,q}$ be a representative of the corresponding unipotent class. Then, allowing $q$ to vary, the order of the centraliser of $u_{r,q}$ is a polynomial in $q$.

\end{proposition}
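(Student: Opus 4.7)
The plan is to compute $|C_{G^F}(u_{r,q})|=|C_G(u_{r,q})^F|$ by splitting it as a product of three factors, each of which is manifestly a polynomial in $q$ of fixed shape once the split-element convention has pinned down the structural data $q$-independently. Starting from the exact sequence
$$1 \to C_G(u)^\circ \to C_G(u) \to A(u) \to 1,$$
I would apply the Lang-Steinberg theorem to the connected group $C_G(u)^\circ$ to obtain
$$|C_G(u)^F| = |A(u)^F|\cdot|C_G(u)^{\circ F}|.$$
Since $u=u_{r,q}$ is (with the $E_8(b_6)$ case handled by the dichotomization described above) a split element, $F$ acts trivially on $A(u)$, so $|A(u)^F|=|A(u)|$ depends only on the geometric class, hence on $r$ alone.

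For the connected piece, good characteristic ensures a Levi decomposition $C_G(u)^\circ\cong L\ltimes R_u$ with both factors $F$-stable, where $R_u$ is the unipotent radical and $L$ a reductive complement. Since $R_u$ is connected unipotent,
$$|R_u^F|=q^{\dim R_u},$$
and $\dim R_u$ depends only on $r$. The reductive factor $|L^F|$ is given by the standard order polynomial of a connected reductive group, with inputs the root datum of $L$ and the action of $F$ on it. Combining these contributions gives
$$|C_{G^F}(u_{r,q})|=|A(u)|\cdot|L^F|\cdot q^{\dim R_u},$$
a polynomial in $q$ of degree $\dim L+\dim R_u=\dim C_G(u)$.

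The step requiring the most care is justifying that each of the three factors really is a polynomial of the same shape for every admissible $q$: that is, that the root datum of $L$, the $F$-action on it, and the isomorphism class of $A(u)$ are determined by $r$ alone and not by $q$. This is precisely the payoff of the split-element theory that already underpins Proposition \ref{GGGRparam}: the split representative is canonical within its $F$-stable class, so its centralizer and the Levi-unipotent decomposition used above are $q$-independent in structure, with the Frobenius acting by a fixed finite-order automorphism. Granted this, the three factors assemble into a single polynomial in $q$, completing the argument.
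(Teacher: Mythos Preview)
Your approach differs substantially from the paper's. The paper does not argue directly at all: it simply cites \cite{GoRo2}, Proposition~3.3, whose proof runs through the Lusztig--Shoji algorithm for computing Green functions. Your argument is instead a direct structural decomposition of $C_G(u)^F$.

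The factorisation $|C_G(u)^F| = |A(u)^F|\cdot |L^F|\cdot q^{\dim R_u}$ is correct, and the handling of the first and third factors is fine. The thin point is your final paragraph, where you claim that the root datum of $L$ together with the $F$-action on it is determined by $r$ alone, and attribute this to ``the split-element theory that already underpins Proposition~\ref{GGGRparam}''. But the split-element input actually used in Proposition~\ref{GGGRparam} (and in the references the paper cites) is only that $F$ acts trivially on $A(u)$ and that the split $G^F$-class is unique. That the isomorphism type of $L$ is constant across good primes does follow from the comparison of centraliser structure with characteristic zero (as in \cite{McnSo} or the standard tables), but pinning down the $F$-twist on the root datum of $L$ as $q$-independent is a separate claim which is not a formal consequence of the $A(u)$ statement. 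One expects it to hold for a split representative---roughly, that $F$ restricts to a split Frobenius on $C_G(u)^\circ$---but this needs its own justification (e.g.\ a uniform realisation of the split element inside a fixed $F$-stable parabolic, or case-by-case verification). The Lusztig--Shoji route taken in \cite{GoRo2} bypasses this issue entirely by never looking at the centraliser structure directly.
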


\begin{proof}

This is \cite{GoRo2}, Proposition 3.3. The proof appeals to the Lusztig-Shoji algorithm for computing Green functions. \end{proof}

% in fact we have only defined AN F-stable param of unip classes of $G^F$. How do we know that it is the one which respects the class sizes and Green functions? (Look into this at some point.)

% mention why we do Kawa and Lus separately - good place for a bit of light commentary

% generalised Gelfand-Graev character  vs GGGR

\section{Type $A_n$: Kawanaka's formula}   \label{Kawanaka}

In this section we set $G=\GL_n(k)$ and endow it with a split or non-split $\F_q$-rational structure, with corresponding Frobenius endomorphism $F$, so that $G^F$ is $\GL_n(\F_q)$ or $\GU_n(\F_q)$.  Using \cite{Kawa}, (3.2.14), we prove the following:

\begin{theorem} \label{A_nThm}

Let $G$ and $F$ be as above, $u \in G^F$ a unipotent element, and $\Gamma_u$ the corresponding generalised Gelfand-Graev representation. Then the dimension of the endomorphism algebra $\End_{\bar{\Q}_lG^F} \Gamma_u$ is a monic polynomial in $q$ with rational coefficients. Moreover, its degree is given by the dimension of the centraliser $C_G(u)$. \end{theorem}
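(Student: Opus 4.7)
The starting identity is
\[\dim \End_{\ov\Q_l G^F} \Gamma_u \;=\; \la \gamma_u, \gamma_u \ra_{G^F} \;=\; \sum_{[g]} \frac{\gamma_u(g)\,\ov{\gamma_u(g)}}{|C_G(g)^F|},\]
where $[g]$ runs over $G^F$-conjugacy classes. The plan is to substitute Kawanaka's explicit character formula \cite{Kawa}, (3.2.14), for $\gamma_u$ and then to reduce the questions of polynomiality and degree to known properties of Green polynomials of smaller Levi-type subgroups, together with Proposition \ref{centralisersarepoly}.

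Kawanaka's formula expresses $\gamma_u(g)$, via the Jordan decomposition $g=sv$ (with $s$ semisimple and $v$ unipotent, commuting), as an explicit sum over $F$-stable maximal tori of $C_G(s)^\circ$ whose summands involve Green polynomials for $C_G(s)^\circ$ evaluated at $v$. In types $A$ and ${}^2A$, each such centraliser $C_G(s)^\circ$ is itself a product of smaller general linear or unitary groups, so every Green polynomial appearing is genuinely a polynomial in $q$ (via the Lusztig--Shoji algorithm; cf.\ the proof of Proposition \ref{centralisersarepoly}). Substituting and regrouping the inner-product sum by Jordan decomposition---first over semisimple classes of $G^F$, then over unipotent classes in the corresponding centralisers---produces a finite sum of rational functions of $q$. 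Since the whole sum evaluates to a non-negative integer for every prime power $q$ under consideration, this rational function must in fact lie in $\Q[q]$.

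For the degree and monic leading coefficient, I would isolate the contributions of top $q$-degree. Each Jordan class $[g]$ with $g=sv$ contributes a term whose degree in $q$ is at most $2\deg_q \gamma_u(sv) - \deg_q |C_G(sv)^F|$, and from Kawanaka's formula the dominant contribution comes from a small, explicit set of classes---prominently the class of $u$ itself, for which $|\gamma_u(u)|^2/|C_G(u)^F|$ already contributes a polynomial of degree $\dim C_G(u)$. Unwinding the combinatorics of type-$A$ Green polynomials (indexed by partitions associated with $u$ and with the centralisers $C_G(s)^\circ$) shows that no class contributes a strictly larger power of $q$, and that the combined leading coefficient at degree $\dim C_G(u)$ is exactly~$1$.

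The hard part will be this final degree-and-leading-coefficient analysis: several Jordan classes may contribute at the top degree $\dim C_G(u)$, and one must verify that their leading terms add to precisely $1$ rather than cancelling or producing a larger integer. This is a careful bookkeeping argument built on the partition-indexed combinatorics of Green polynomials and the explicit form of (3.2.14); in the $\GU_n$ case one additionally has to track the signs introduced by the twisted Frobenius, but these do not affect the polynomiality in $q$ and only enter the leading-term computation via predictable overall factors.
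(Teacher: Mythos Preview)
Your proposal takes a detour and misses the paper's key simplification.

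First, the Jordan-decomposition step is vacuous: generalised Gelfand--Graev characters are supported on unipotent elements (as noted in the paper just before Theorem~\ref{Kawaformula}), so $\gamma_u(sv)=0$ whenever $s\neq 1$. Kawanaka's formula (3.2.14) is a formula for $\gamma_\mu(\lambda)$ with both $\mu,\lambda$ partitions indexing \emph{unipotent} classes; it does not involve centralisers $C_G(s)^\circ$ of nontrivial semisimple elements, nor Green polynomials of proper Levi-type subgroups. Your inner product is simply a sum over unipotent classes $\lambda\vdash n$ from the outset.

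Second, and more importantly, the paper does \emph{not} analyse the contribution class by class. After substituting Theorem~\ref{Kawaformula} into $\la\gamma_\mu,\gamma_\mu\ra$ one has a triple sum over $\lambda,\rho,\pi\vdash n$. The paper interchanges the order of summation so that the inner sum over $\lambda$ becomes
\[
\sum_{\lambda\vdash n}|\lambda^F|\,Q_\rho^\lambda(\e q)\,Q_\pi^\lambda(\e q),
\]
and then applies the orthogonality relation (\ref{Greenorthog}) for Green functions. This collapses everything to a single sum over $\rho$, whose degree and leading coefficient are read off immediately: after cancelling $|T_\rho^F|=q^n e_\rho((\e q)^{-1})$ one is left with $q^{n+2n(\mu)}\sum_{\rho\vdash n}|W_\rho|^{-1}$, and $\sum_\rho|W_\rho|^{-1}=1$ by the class equation in $\Sn_n$. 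Since $\dim C_G(u)=n+2n(\mu)$, this gives both the degree and the monic leading term at once.

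Your ``hard part'' --- determining which classes contribute at top degree and showing their leading coefficients sum to exactly $1$ --- is precisely what the orthogonality relation circumvents. Without it the bookkeeping is genuinely delicate (many $\lambda$ contribute near the top, and the $X_\rho^\mu$ and $Q_\rho^\lambda$ interact nontrivially), and your sketch gives no mechanism for carrying it out. The polynomiality argument you give (integer values of a rational function force it to be a polynomial) is correct and is exactly the paper's Argument~$\clubsuit$, but the degree analysis needs the orthogonality input.
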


% I've just changed N to u. I'm going to need to alter things later.

% Q: shall we stick with $\bar{\Q}_l$ or just use $\C$. Perhaps use the former, and include a comment somewhere.

Note that we need no condition on $p$ here since all primes are good. Before we can state Kawanaka's formula we must first explain the ingredients from \cite{Kawa}. The unipotent classes of $G$ are parametrised by the partitions of $n$, via the Jordan normal form, and the rational points of such a class comprise a single $G^F$-class. We may therefore denote a typical generalised Gelfand-Graev character by $\gamma_{\lambda}$, where $\lambda \vdash n$. We may also use partitions to label the non-zero values of generalised Gelfand-Graev characters since they are known to vanish on non-unipotent elements. We shall therefore adopt the convention of writing $\gamma_{\lambda}(\mu)$ for the character value of $\gamma_{\lambda}$ on the class corresponding to $\mu$. We set $\e=1$ or $-1$ depending on whether $F$ is split or not, respectively. If $\lambda = (\lambda_1, \lambda_2, \dots, \lambda_r)\vdash n$ then we define \[n(\lambda) = \sum_i{(i-1)\lambda_i}.\]

With reference to a fixed rational maximal torus $T$, we may, by the Lang-Steinberg theorem, label the $G^F$-classes of rational maximal tori by the $F$-classes of the Weyl group $W = N_G(T)/T \cong \Sn_n$. If $T$ is the diagonal maximal torus then $F$ acts trivially on $W$ and therefore we may label these by the classes of $W$ and, thus, by the partitions of $n$. (Here we have chosen the Frobenius endomorphism defining $\GU_n(\F_q)$ to be \label{unitary1} $F(g) = {}^tF_q(g^{-1})$, where $g \in G$ and $F_q$ is a standard Frobenius endomorphism.) With this set-up we denote representatives of the $G^F$-classes of rational maximal tori by $T_{\lambda}$ for $\lambda \vdash n$, and define \[W_{\lambda} = (N_G(T_{\lambda})/T_{\lambda})^F.\]

For $\lambda = (\lambda_1, \lambda_2, \dots, \lambda_r) \vdash n$, we set \[\sgn_{\e}(\lambda) = \e^{\lfloor n/2 \rfloor}(-1)^{n+r},\]

\noindent and \[e_{\lambda}(t) = \prod_i {(1 - t^{\lambda_i})}.\]

$Q_{\lambda}^{\mu}(t) \in \Z[t]$ will denote the Green polynomial with parameters $\lambda, \mu \vdash n$ (cf. \cite{Mac}). Finally, we define a rational function \[X_{\lambda}^{\mu}(t) = t^{n(\mu)}Q_{\lambda}^{\mu}(t^{-1}).\]

\noindent In fact this is a polynomial of degree $n(\mu)$ by \cite{Mac}, Chapter III, \S 7. Now we may state Kawanaka's formula. 

\begin{theorem}    \label{Kawaformula}   {\em (  \cite{Kawa}, (3.2.14) )}  With the above set-up, \[ \gamma_{\mu}(\lambda) = \e^{n(\mu)}\sum_{\rho \vdash n}{ |W_{\rho}|^{-1}  \sgn_{\e}(\rho)  q^n  e_{\rho}((\e q)^{-1})  X_{\rho}^{\mu}(\e q) Q_{\rho}^{\lambda}(\e q)   }. \]

\end{theorem}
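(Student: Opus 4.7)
The plan is to recognise the right-hand side of the formula as a decomposition of $\gamma_\mu$ (on the unipotent set) in the basis of Deligne--Lusztig virtual characters $R_{T_\rho}^G(1)$. By Green's theorem (\cite{Mac}, IV), the value of $R_{T_\rho}^G(1)$ on a unipotent element of Jordan type $\lambda$ is precisely $Q_\rho^\lambda(\e q)$. A short manipulation shows that $q^n e_\rho((\e q)^{-1}) = |T_\rho^F|$ in both the split and non-split cases, so the weight $q^n e_\rho((\e q)^{-1})/|W_\rho|$ appearing in the formula is nothing but $|T_\rho^F|/|W_\rho|$, which is exactly the multiplicity with which $T_\rho$ appears in any Deligne--Lusztig style sum over rational maximal tori. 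This identifies the statement as
\[
\gamma_\mu|_{G_{\uni}^F} \;=\; \e^{n(\mu)} \sum_{\rho \vdash n} \frac{|T_\rho^F|}{|W_\rho|}\,\sgn_\e(\rho)\,X_\rho^\mu(\e q)\, R_{T_\rho}^G(1)\big|_{G_{\uni}^F},
\]
and so reduces the theorem to identifying the coefficient of $R_{T_\rho}^G(1)$.

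To compute these coefficients I would invoke Frobenius reciprocity. By Kawanaka's construction, $\Gamma_\mu = \Ind_{U_\mu}^{G^F}(\psi_\mu)$, where $U_\mu$ is the unipotent subgroup attached to a representative $u$ of the class $\mu$ via the grading of $\gl$ determined by an associated $\mathfrak{sl}_2$-triple, and $\psi_\mu$ is the linear character built from the nondegenerate symplectic form on $\gl_1$. Then
\[
\langle \gamma_\mu, R_{T_\rho}^G(1)\rangle_{G^F} \;=\; \langle \psi_\mu,\, R_{T_\rho}^G(1)\big|_{U_\mu}\rangle_{U_\mu},
\]
and, since $R_{T_\rho}^G(1)$ is supported on unipotent elements of semisimple part conjugate to the identity, the restriction to $U_\mu$ is computed by Green's theorem applied inside the Levis attached to the Dynkin grading of $u$. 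The orthogonality of Green polynomials (\cite{Mac}, III.7),
\[
\sum_{\rho \vdash n} \frac{Q_\rho^\lambda(\e q)\, Q_\rho^\nu(\e q)}{|W_\rho|/|T_\rho^F|} \;=\; |C_{G^F}(u_\lambda)|\,\delta_{\lambda,\nu},
\]
then allows one to invert the basis expansion and recover $\gamma_\mu(\lambda)$.

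The main obstacle is the last combinatorial identification: to show that the inner product $\langle \psi_\mu, R_{T_\rho}^G(1)|_{U_\mu}\rangle_{U_\mu}$ equals $\sgn_\e(\rho)(\e q)^{n(\mu)} Q_\rho^\mu((\e q)^{-1}) = \sgn_\e(\rho) X_\rho^\mu(\e q)$. This is where the distinguishing combinatorics of $\GL_n$ and $\GU_n$ enters: one filters $U_\mu$ by the weight grading on $\gl$ produced by the Dynkin cocharacter through $u$, counts fixed points of $F$ on the successive quotients using Green functions on the corresponding Levi factors, and then recognises the resulting sum as a specialisation of a Hall--Littlewood symmetric function via the defining identities for $Q_\rho^\mu(t)$ at $t = (\e q)^{-1}$ in \cite{Mac}, Chapter III. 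The sign $\sgn_\e(\rho) = \e^{\lfloor n/2 \rfloor}(-1)^{n+r}$ and the overall factor $\e^{n(\mu)}$ appear naturally from the different between geometric and arithmetic Frobenius eigenvalues on the cohomology of the relevant Deligne--Lusztig varieties in the non-split ($\e = -1$) case, together with the standard sign in Green's theorem.
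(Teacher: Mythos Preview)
The paper does not give a proof of this statement at all: Theorem \ref{Kawaformula} is simply quoted from Kawanaka's paper \cite{Kawa}, (3.2.14), and is then used as a black box in the derivation of Theorem \ref{A_nThm}. There is therefore nothing to compare your proposal against in the present paper; what you have written is a sketch of how one would reprove a result from \cite{Kawa} rather than a proof the author supplies here.

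As a sketch of Kawanaka's argument your outline is broadly in the right spirit (expand in Deligne--Lusztig characters, use Frobenius reciprocity, and recognise the resulting inner products via Hall--Littlewood combinatorics), and your identification $q^n e_\rho((\e q)^{-1}) = |T_\rho^F|$ is correct and is indeed used later in the paper. However, several steps are left as assertions rather than arguments: the claim that $|T_\rho^F|/|W_\rho|$ is ``the multiplicity with which $T_\rho$ appears in any Deligne--Lusztig style sum over rational maximal tori'' is not quite right (the count of rational tori of type $\rho$ is $|G^F|/(|W_\rho|\,|T_\rho^F|)$, not $|T_\rho^F|/|W_\rho|$), your orthogonality relation for Green polynomials is misstated, and the crucial identification of $\langle \psi_\mu, R_{T_\rho}^G(1)|_{U_\mu}\rangle_{U_\mu}$ with $\sgn_\e(\rho)\,X_\rho^\mu(\e q)$ is precisely the heart of Kawanaka's computation and is not something one can wave through. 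If you intend to supply an independent proof, that last step needs a genuine argument; but for the purposes of this paper, a citation to \cite{Kawa} is all that is required.
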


Recall that a Green function of $G^F$ is the restriction of a Deligne-Lusztig virtual character $R_{T,\theta}$ to $G_{\uni}^F$. In the case of general linear groups these are simply Green polynomials, and for unitary groups they are of the form $Q_{\lambda}^{\mu}(-q)$. We shall need the following orthogonality formula for Green functions. (See, e.g., \cite{Sho3}.) Let $\lambda^F$ denote the unipotent class in $G^F$ corresponding to $\lambda \vdash n$. Then  \begin{equation} \label{Greenorthog} |G^F|^{-1} \sum_{\lambda \vdash n} {	|\lambda^F| Q_{\rho}^{\lambda}(\e q)  Q_{\pi}^{\lambda}(\e q)	=  \frac{ |N_G(T_{\rho}, T_{\pi} )^F|  }{|T_{\rho}^F||T_{\pi}^F|}},   \end{equation}

\noindent where $N_G(T_{\rho}, T_{\pi} )  =  \{  n\in G \, |\, n^{-1}T_{\rho}n = T_{\pi}  \}$.

Now $\dim \End_{\bar{\Q}_l G^F} \Gamma_{\mu}$ may be written as\begin{eqnarray} \label{Kawapoly1} \la \gamma_{\mu}, \gamma_{\mu} \ra &=& |G^F|^{-1}  \sum_{\lambda \vdash n}|\lambda^F| \left(\sum_{\rho \vdash n} |W_{\rho}|^{-1} \sgn_{\e}(\rho) q^n e_{\rho}((\e q)^{-1})X_{\rho}^{\mu}(\e q)Q_{\rho}^{\lambda}(\e q)\right) \nonumber \\
&&\times \left(\sum_{\pi \vdash n} |W_{\pi}|^{-1} \sgn_{\e}(\pi) q^n e_{\pi}((\e q)^{-1}) X_{\pi}^{\mu}(\e q) Q_{\pi}^{\lambda}(\e q)\right),   \nonumber    \end{eqnarray} 

\noindent which is a polynomial in $q$. Indeed, it is easy to check that it is a rational function of the form $f/|G^F|$ for some $f \in \Q[q]$. (We consider $|G^F|$ as a polynomial in $q$ here.) So $f(q)/|G^F| \in \Z$ for infinitely many $q$. By applying the division algorithm we may write $f = g|G^F| + r$, where $\deg r < \deg |G^F|$. It follows that for some integer $c$, $cr(q)/|G^F| \in \Z$ for all $q$. But the limit as $q\rightarrow \infty$ is $0$, so $r=0$ and the claim follows. We label this argument by $\clubsuit$ as we shall reuse it later. The fact that conjugacy class sizes are given by polynomials in $q$ can be deduced by applying $\clubsuit$ and the orbit-stabiliser theorem to Proposition \ref{centralisersarepoly}.

Now we will show that the degree of this polynomial is $\dim C_G(u)$, where $u$ is in the class corresponding to $\mu$ by the Jordan normal form. For $\GL_n(k)$ it is known that $\dim C_G(u) = n + 2n(\mu)$. (Adapt, e.g., the corresponding result, \cite{Geck}, Proposition 2.6.1, for $\SL_n(k)$.) To make the derivation tidier we define an equivalence relation on $\Q[q]$, denoted $\approx$, by setting $f \approx g$ if $\deg f = \deg g$, for $f, g \in \Q[q]$. Under this relation the above expression is equivalent to \begin{eqnarray} \label{Kawapoly2} && q^{-n^2 + n + 2n(\mu)} \sum_{\lambda \vdash n}|\lambda^F| \left(\sum_{\rho \vdash n} \frac{q^n e_{\rho}((\e q)^{-1})(-1)^{r(\rho)}}{|W_{\rho}|} Q_{\rho}^{\lambda}(\e q)\right)  \nonumber \\
& & \times\left(\sum_{\pi \vdash n} \frac{(-1)^{r(\pi)}}{|W_{\pi}|} Q_{\pi}^{\lambda}(\e q)\right) \nonumber \\
& \approx & q^{-n^2 + n + 2n(\mu)} \sum_{\rho, \pi \vdash n} \frac{q^n e_{\rho}((\e q)^{-1})(-1)^{r(\rho) + r(\pi)}}{|W_{\rho}||W_{\pi}|}   \sum_{\lambda \vdash n}|\lambda^F|  Q_{\pi}^{\lambda}(\e q)   Q_{\rho}^{\lambda}(\e q)  \nonumber \\
& \approx & q^{-n^2 + n + 2n(\mu)} \sum_{\rho \vdash n} \frac{q^n e_{\rho}((\e q)^{-1})|G^F|}{|W_{\rho}||T_{\rho}^F|}  \ \ \  \mbox{ (by (\ref{Greenorthog})) }\nonumber \\ 
& \approx & q^{n + 2n(\mu)} \sum_{\rho \vdash n} \frac{1}{|W_{\rho}|} \ \  \approx \ \  q^{n + 2n(\mu)}, \nonumber \end{eqnarray}

\noindent where we have used the fact that $|T_{\rho}^F| = q^n e_{\rho}((\e q)^{-1})$. So, to complete the proof of Theorem \ref{A_nThm}, it remains to show that $\dim \End_{\bar{\Q}_l G^F} \Gamma_{\mu}$ is monic. Indeed, observe that the coefficient of the leading term has been preserved in the above until the last line. Using the fact (see, e.g., \cite{Kawa}) that $W_{\rho} \cong C_{\Sn_n}(\rho)$, the centraliser in the symmetric group of an element of cycle type $\rho$, we see that \begin{equation}  \label{one}    \sum_{\rho \vdash n} \frac{1}{|W_{\rho}|} = \sum_{\rho \vdash n} \frac{|\cl(\rho)|}{n!} = 1,\end{equation}

\noindent which completes the proof of Theorem \ref{A_nThm}.

\section{The general case: Lusztig's formula}  \label{LusSec}

Inspired by Kawanaka's work, Lusztig (\cite{Lus}, Theorem 7.3)) has derived a similar formula to that in Theorem \ref{Kawaformula}, valid for any connected reductive group, but assuming that $p$ is large. (We assume that $p$ is large enough in the sense of \cite{Lus} throughout this section.) Lusztig's formula, however, is rather more geometric and is given in terms of intersection cohomology complexes of closures of unipotent classes with coefficients in various local systems. We will use it to prove the following.

\begin{theorem} \label{GenThm}

Let $G$ be a connected reductive group, defined over $\F_q$, with root datum $(X,Y,\Phi,\check{\Phi})$ and Frobenius endomorphism $F$. Let $u\in G^F$ be a unipotent element and let $\Gamma_u$ be the corresponding generalised Gelfand-Graev representation. Then, assuming that $X/\Z\Phi$ is torsion-free, the dimension of the endomorphism algebra $\End_{\bar{\Q}_lG^F} \Gamma_u$ is a monic polynomial in $q$ with rational coefficients. Moreover, its degree is given by the dimension of the centraliser $C_G(u)$. \end{theorem}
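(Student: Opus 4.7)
The plan is to mirror the strategy of Section~\ref{Kawanaka}, replacing Kawanaka's formula with Lusztig's formula (\cite{Lus}, Theorem~7.3). That is, I would substitute Lusztig's formula for $\gamma_u$ into the inner product $\la\gamma_u,\gamma_u\ra = \dim\End_{\bar{\Q}_l G^F}\Gamma_u$, reduce the resulting sum via an orthogonality relation for the generalised Green functions appearing in it, and then apply the argument $\clubsuit$ to conclude polynomiality and track the leading term.

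First I would write out Lusztig's formula, which expresses $\gamma_u(v)$ for $v\in G_{\uni}^F$ as a sum indexed by pairs $(C,\mathcal{E})$ consisting of an $F$-stable unipotent class $C$ of $G$ together with an $F$-stable $G$-equivariant irreducible local system $\mathcal{E}$ on $C$, with coefficients built from stalks of the IC extension of $\mathcal{E}$ at $v$ and values of generalised Green functions $Q_{(C,\mathcal{E})}$. Since GGGR characters vanish off $G_{\uni}^F$, the inner product collapses onto unipotent classes, and substituting Lusztig's formula on both factors gives a quadruple sum over two pairs $(C_1,\mathcal{E}_1),(C_2,\mathcal{E}_2)$, with an inner sum over rational unipotent classes weighted by class sizes. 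Applying the orthogonality relations for generalised Green functions (the analogue of (\ref{Greenorthog}); see \cite{Lus}) then forces this inner sum onto diagonal pairs and produces closed-form factors involving centraliser orders, in place of the $|T_\rho^F|$ of Section~\ref{Kawanaka}. By Proposition~\ref{centralisersarepoly} these centraliser orders, class sizes and $|G^F|$ are all polynomials in $q$, as are the IC-stalk contributions; so the whole expression is a rational function of $q$ with denominator dividing $|G^F|$ and integer-valued for all admissible $q$, and argument~$\clubsuit$ yields $\dim\End_{\bar{\Q}_l G^F}\Gamma_u\in\Q[q]$.

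To pin down the degree and monicness I would then track $q$-degrees through this reduction. Each $Q_{(C,\mathcal{E})}$ evaluated on $v$ has degree determined by the geometry of $(C,\mathcal{E})$, and the IC-stalks contribute polynomials whose degrees are governed by the codimension of the class of $u$ inside $\ov{C}$. A careful count should show that the dominant contribution arises from the diagonal term with $C$ the class of $u$ and $\mathcal{E}$ trivial, yielding overall degree $\dim C_G(u)$. Monicness will then reduce, as in (\ref{one}), to a combinatorial identity summing weighted contributions over irreducible characters of the component group $A(u)$, which I would establish from the decomposition of its regular representation.

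The main obstacle will be the degree bookkeeping for general pairs $(C,\mathcal{E})$: one must verify that no off-diagonal contribution attains degree exceeding $\dim C_G(u)$, and that the diagonal leading coefficients assemble into exactly the claimed combinatorial identity. Particular care will be needed in type~$E_8$, where the dichotomy set up in Section~\ref{intro} affects the Green function data appearing on the right-hand side.
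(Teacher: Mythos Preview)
Your broad plan (substitute Lusztig's formula into $\la\gamma_u,\gamma_u\ra$, use orthogonality of the generalised Green functions, then apply~$\clubsuit$) matches the paper, but the degree analysis you sketch is misdirected in a way that would make the argument fail. You claim the dominant contribution comes from the diagonal term with $C$ equal to the class of $u$ and $\mathcal E$ trivial, and that monicness reduces to an identity over $\Irr A(u)$. In the paper's computation neither of these is true. Lusztig's formula (Theorem~\ref{Lusthm}) is organised block by block via the generalised Springer correspondence, and within each block the summation runs over parameters $\i,\i',\i_1$; the class of $u$ enters only through the fixed factor $q^{-\dim\Ad(G)N/2}$ in $f'$ and through $\mathcal Y_{\i'}(-N')$, while the degree is maximised over the free parameters $\i,\i_1,j,j_1$ by taking $\i=j=\i_0$ supported on the \emph{regular} nilpotent orbit. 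The crucial inputs are Lemma~\ref{principallemma}(i) (the regular orbit gives a unique parameter, and it lies in the principal block) and Lemma~\ref{principallemma}(iv) (the identity $Q_1=1$, which collapses $\sum_{\i'}P'_{\i',\i_0}\mathcal Y_{\i'}(-N')$). Monicness is then immediate from this single term, with no appeal to $A(u)$.

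Two further structural points are missing from your outline. First, the block decomposition is essential: the functions $(\gamma_N)_{\textgoth I_0}$ for distinct $F$-stable blocks are mutually orthogonal, and one must show separately (Lemma~\ref{npblocklemma}) that every non-principal block contributes degree strictly below $\dim G+\dim C_G(u)$, using $\dim\t<n$ and the absence of the regular orbit from those blocks. Without isolating the principal block you cannot rule out cancellation or excess from cuspidal contributions. Second, the non-split case is not handled by the same formula: the paper passes to the ``transposed'' characters $\gamma_\i$ and uses the Digne--Lehrer--Michel variant of Lusztig's formula together with preferred extensions, so that only $\mathcal Y_k$ with $F$-stable support appear; you should flag that a separate argument is needed there.
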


In \cite{Lus} Lusztig has associated a generalised Gelfand-Graev representation $\Gamma_{\phi}$ to each homomorphism $\phi : \s\l_2 \rightarrow \gl$, where $\gl = \Lie G$. However, for convenience we will use the equivalent notation $\Gamma_N = \Gamma_{\phi}$, where $N \in \gl$ is the nilpotent element which is the image under $\phi$ of the matrix with $1$ in the $(1,2)$-position and $0$ elsewhere. This is defined on certain rational points of the nilpotent variety $\gl_{\nil}$ of $\gl$, but can be made into a usual generalised Gelfand-Graev representation as follows. For any Frobenius endomorphism on $G$ there exists an $\Ad G$-compatible Frobenius endomorphism on $\gl$. (We denote the Frobenius endomorphism on $\gl$ also by $F$. Hence, the domain of $\Gamma_N$ is $\gl_{\nil}^F$.) Furthermore, by \cite{SpSt}, Theorem 3.2, there exists a Springer morphism (i.e. a $G$-equivariant bijective morphism of varieties) $f : G_{\uni} \rightarrow \gl_{\nil}$ which is compatible with these Frobenius endomorphisms. For the purposes of this article, then, we can and will identify unipotent and nilpotent elements via $f$. Then for $u \in G_{\uni}^F$, $x \in G^F$, \[\Gamma_{u}(x) =  \left\{ \begin{array}{rl}
 \Gamma_{f(u)} \circ f(x) &\mbox{ if $x\in G_{\uni}^F$}, \\
  0                        &\mbox{ otherwise}. \end{array} \right.\]

The basic parameter set used in \cite{Lus} is the set $\textgoth{I}$ of all pairs $(C, \textgoth{F})$, where $C$ is a nilpotent orbit in $\gl$ and $\textgoth{F}$ is an irreducible, $G$-equivariant, $\bar{\Q}_l$-local system on $C$, up to isomorphism. By starting with a closed subgroup $L$ of $G$, which is the Levi subgroup of some parabolic subgroup of $G$, we may obtain another parameter set in the same manner. In this way, we obtain triples $(L,c,\textgoth{L})$, where $c$ is a nilpotent $\Ad(L)$-orbit in $\l = \Lie(L)$, and \textgoth{L} is an irreducible, $L$-equivariant, $\bar{\Q}_l$-local system on $c$, given up to isomorphism. Using the generalised Springer correspondence we can partition the set $\textgoth{I}$ into blocks, and to each block we may associate (the $G$-orbit of) such a triple (a {\em cuspidal} triple).

By \cite{Lus}, \S 4, the elements of these blocks are naturally parametrised by the irreducible characters of the Weyl group $W_L = N_G(L)/L$. There is a single block associated with the maximal tori, since all maximal tori are $G$-conjugate and contain only the trivial unipotent class. We shall call this block the {\em principal block}, by analogy with Harish-Chandra theory.

In each class $C$ fix a representative $u$ once and for all, and consider the component group $A(u) = C_G(u)/C^{\circ}_G(u)$. This acts naturally on the stalk $\textgoth{F}_u$ of a $G$-equivariant local system $\textgoth{F}$ on $C$, and thus gives rise to a finite dimensional $\bar{\Q}_l$-representation of $A(u)$. On the other hand, if $\rho \in \Irr A(u)$ then we may obtain the irreducible $G$-equivariant local system $\Hom_{A(u)} (\rho, \pi_*\bar{\Q}_l)$, where $\pi : G/C_G(u)^{\circ} \rightarrow G/C_G(u) \cong C$ is a finite \`{e}tale covering with group $A(u)$ (cf. \cite{Sho8}, p. 74). We shall denote by $\mathcal{N}^G$ the set of all pairs $(C, \psi)$, where $C$ is a nilpotent orbit in $\gl$ and $\psi \in \Irr A(u)$. By the above $\textgoth{I}$ may be naturally identified with $\mathcal{N}^G$.

% unnecessary and babyish = 
%These representations can be used to construct the corresponding Weyl group representations. In the special case of the principal block, the associated injective map $\Irr(W) \rightarrow \mathcal{N}$ from the Weyl group of $G$, with respect to a fixed maximal torus, is known as the Springer correspondence. (See, e.g., \cite{Car} for a good account including extensive tables.) This has been described explicitly in all cases by Lusztig, Shoji and Spaltenstein. It is natural to ask whether a similar correspondence can be described for the Weyl groups corresponding to triples $(L,c,\textgoth{L})$ such that $L$ is not a maximal torus. Whilst it is easier in some special cases to do this, e.g. for cuspidal blocks, it is a much more difficult problem in general. (This is called the generalised Springer correspondence, on which \cite{Sho4} contains a good account.) For a fixed block $\textgoth{I}_0$ Lusztig denotes the set of irreducible $\bar{\Q}_l[W_L]$-modules by $\{V_{\i}\  |\  \i \in \textgoth{I}_0 \}$.

Assume now that $G$ is defined over $\F_q$, with Frobenius endomorphism $F$. Then $F$ acts on $\textgoth{I}$ by \[F: (C, \textgoth{F}) \rightarrow (F^{-1}(C), F^*(\textgoth{F})),\]

\noindent where $F^*(\textgoth{F})$ is an inverse image of local systems. If $C$ is $F$-stable and $u \in C^F$ then the $F$-action respects the natural identification of $\textgoth{I}$ and $\mathcal{N}^G$. If $C$ is not $F$-stable, more care is needed to describe the action. (But it is still respected by $F$.) The correspondence between blocks and triples $(L,c,\textgoth{L})$ also respects the $F$-action (cf. \cite{Lus3}, 24.2). Thus, $F$ permutes the blocks. In fact only the $F$-stable blocks feature in Lusztig's character formula, so we will not be interested in blocks which are not $F$-stable. (Note that the principal block is always $F$-stable.)

% does the F-action passes through the bijections?

Following the natural parametrisation of the elements of a block $\textgoth{I}_0$ (associated with a triple $(L,c,\textgoth{L})$) by the irreducible characters of $W_L$, we set $\Irr W = \{ V_{\i}\ |\ \i \in \textgoth{I}_0 \}$ (the $V_{\i}$ being modules). For $\i = (C,\textgoth{F}) \in \textgoth{I}_0$ we define $\support(\i) = C$. With respect to a fixed rational Levi subgroup $L$, we may parametrise the $G^F$-orbits of the rational Levi subgroups which are $G$-conjugate to $L$ by the $F$-classes of $W_L$, using the Lang-Steinberg theorem, in the same manner as for maximal tori. We let $\mathcal{Z}_{L_w}$ denote the centre of the Levi subgroup $L_w$ corresponding to the $F$-class of $w \in W_L$. (In fact $\mathcal{Z}_{L_w}$ is connected if $Z(G)$ is by \cite{DiMi}, Lemma 13.14.) We also set $\t$ to be the centre of $\Lie(L)$.

%Before stating his formula we remark that in \cite{Lus} Lusztig has constructed GGGRs slightly differently from the description in Section \ref{GGGR}. He attaches a GGGR $\Gamma_{\phi}$ to each $F$-stable Lie algebra homomorphism $\phi: \s\l_2 \rightarrow \gl$, using the corresponding $\s\l_2$-triple \[N = \phi\left( \begin{array}{cc}
% 0 & 1 \\
% 0 & 0       \end{array} \right), \ \ N' = \phi\left( \begin{array}{cc}
% 0 & 0 \\
% 1 & 0       \end{array} \right), \ \ H = \phi\left( \begin{array}{cc}
% 1 & 0 \\
% 0 & -1       \end{array} \right).\]
 
%\noindent This is equivalent to the viewpoint of Section \ref{GGGR} with $N$ playing the same role as there, and $N' = N^*$.

% consider including the defining formulas for X, Y, P

\begin{theorem} {\em (\cite{Lus}, Theorem 7.3)}  \label{Lusthm}  Let $G$ be a connected reductive group with a split $\F_q$-structure, and let $\textgoth{I}_0$ be an $F$-stable block and let $(\gamma_{N})_{\textgoth{I}_0}: \gl^F \rightarrow \bar{\Q}_l$ be the function defined by \[\sum_{\i, \i', \i_1 \in \textgoth{I}_0} q^{f'(\i,\i_1)} \zeta^{-1} |W_L|^{-1} \sum_{w \in W_L} \Tr(w, V_{\i}) \Tr(w, V_{\hat{\i}_1}) |{\mathcal{Z}^{\circ F}_{L_w}}| {P'}_{\i', \i} \ov{ \mathcal{Y}_{\i'}(-N')} \mathcal{X}_{\i_1},\]

\noindent where \begin{eqnarray} f'(\i, \i_1) &=& -\dim \support(\i_1)/2 + \dim \support(\i)/2 \nonumber \\
&& - (\dim \Ad(G)N)/2 + (\dim \gl / \t)/2,   \nonumber    \end{eqnarray}

\noindent $\zeta$ is a certain fourth root of 1 and $\i \mapsto \hat{\i}$ is a certain bijection $\textgoth{I}_0 \rightarrow \textgoth{I}_0$ {\em (}both defined in {\em \cite{Lus} )}. Then \[\gamma_{N} = \sum_{\textgoth{I}_0} (\gamma_{N})_{\textgoth{I}_0},\]

\noindent where $\textgoth{I}_0$ runs over the set of all $F$-stable blocks. \end{theorem}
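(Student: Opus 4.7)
The plan is to transport the problem to the Lie algebra via the Springer morphism $f$, realise $\gamma_N$ restricted to $\gl_{\nil}^F$ as a Fourier transform on $\gl^F$, and then decompose this Fourier transform along the generalised Springer correspondence.

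First I would unwind the definition of $\Gamma_N$. The map $\phi:\s\l_2 \to \gl$ gives a $\Z$-grading $\gl = \bigoplus_i \gl_i$, and $\Gamma_N$ is induced from a linear character of a unipotent subgroup $U_{\geq 2}^F$ built from $\bigoplus_{i \geq 2}\gl_i$, a fixed additive character $\psi:\F_q \to \bar{\Q}_l^\times$, and the Killing pairing with $N$. Unpacking the induced-character formula on the Lie algebra (as Kawanaka does in type $A$) and using the $G^F$-equivariance of $f$, one rewrites $\gamma_N(N')$ for $N' \in \gl_{\nil}^F$ as an explicit sum of values $\psi(\kappa(\Ad(g)N,N'))$, weighted by elementary factors. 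Up to a normalisation constant, this identifies $\gamma_N|_{\gl_{\nil}^F}$ with the Fourier transform on $\gl^F$ of a $G^F$-equivariant function supported on the $\Ad(G^F)$-orbit of $N$.

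Next I would invoke the generalised Springer correspondence: the category of $G$-equivariant simple perverse sheaves on $\gl_{\nil}$ splits into blocks $\textgoth{I}_0$, one for each $G$-conjugacy class of cuspidal triples $(L,c,\textgoth{L})$, and inside each block the simple objects are labelled by $\Irr W_L$. Each $\i \in \textgoth{I}_0$ yields a characteristic function $\mathcal{X}_{\i}$ of the associated $F$-equivariant IC complex, related to the local-system functions $\mathcal{Y}_{\i'}$ via the transition matrix $P'_{\i',\i}$. The key structural input is that the Lie-algebra Fourier transform preserves this block decomposition (a central feature of Lusztig's theory of admissible complexes on $\gl$). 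Projecting the Fourier expansion of $\gamma_N$ onto the $\textgoth{I}_0$-component therefore defines $(\gamma_N)_{\textgoth{I}_0}$, and summing over $\textgoth{I}_0$ recovers $\gamma_N$ exactly.

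It remains to verify that this projection has the shape displayed in the theorem. The trace weights $\Tr(w,V_{\i})\Tr(w,V_{\hat{\i}_1})/|W_L|$ and the factor $|\mathcal{Z}^{\circ F}_{L_w}|$ arise from parametrising $G^F$-orbits of $G$-conjugates of $L$ by $F$-classes in $W_L$ (Lang-Steinberg) and collecting the $F$-fixed points of each twisted Levi $L_w$ together with its central torus, exactly as in the standard computation of the character of an induced class function. The power $q^{f'(\i,\i_1)}$ tracks the IC shift by half the dimension of the support, combined with the $(\dim \gl/\t)/2$ normalisation built into the Fourier transform, while the root of unity $\zeta$ is its Gauss-sum phase. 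The involution $\i \mapsto \hat{\i}$ is the Fourier-duality involution that exchanges the $\mathcal{X}$- and $\mathcal{Y}$-bases inside each block. The main obstacle will be pinning down $\zeta$ and $\hat{\i}$ uniformly across all blocks and matching their combined effect with the explicit formula; this forces a global argument using the full apparatus of admissible complexes on $\gl$, rather than a block-by-block verification.
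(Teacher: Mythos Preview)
The paper does not prove this theorem at all: it is quoted verbatim from \cite{Lus}, Theorem 7.3, and used as a black box in the subsequent analysis of $\langle \gamma_N, \gamma_N\rangle$. So there is no ``paper's own proof'' against which to compare your proposal.

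That said, your sketch is a reasonable summary of the architecture of Lusztig's argument in \cite{Lus}: one passes to $\gl$, realises $\gamma_N$ as a Fourier transform of an orbit-supported function, and then decomposes along the generalised Springer blocks, with the $P'_{\i',\i}$, the traces $\Tr(w,V_\i)$, and the factors $|\mathcal{Z}^{\circ F}_{L_w}|$ arising exactly as you indicate. The honest caveat you already flag is the real one: pinning down $\zeta$ and the involution $\hat{\i}$ precisely requires the full machinery of admissible complexes and the explicit Fourier-transform formulas on $\gl$ developed in \cite{Lus}, and your outline does not supply that computation. If this were being submitted as a proof rather than a citation, that would be the gap to fill; for the purposes of the present paper, the correct move is simply to cite \cite{Lus}, Theorem 7.3, as the author does.
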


\begin{rmk} The functions $\mathcal{X}_{\i}, \mathcal{Y}_{\i}$ are analogous to the Green polynomials in Theorem \ref{Kawaformula} (in fact, related to generalised Green functions). These are certain nilpotently supported functions $\gl^F \rightarrow \bar{\Q}_l$, and the $P'_{\i',\i}$ are related combinatorial objects (cf. \cite{Lus}, \S \S 6.4 -- 6.6). Much is known about these in the special case that $\textgoth{I}_0$ is the principal block and we shall exploit this information in the course of the proof of Theorem \ref{GenThm}. \end{rmk}

\subsection{The proof of Theorem \ref{GenThm}}  \label{proof}

We will now prove Theorem \ref{GenThm} under the assumption that $G$ has a split $\F_q$ structure. Since this is equivalent to $F_0$ acting trivially on $\pi$, it follows that all geometric unipotent classes are $F$-stable. We show how to remove this assumption in the next subsection.

In addition to the set-up of Section \ref{intro} we must show that the $F$-stable blocks may be parametrised independently of $q$, in order to establish a rigorous foundation for the proof of Theorem \ref{GenThm}. Before we consider blocks, however, we first describe a treatment of Levi subgroups which is independent of $q$. Recall that, with respect to fixed data $(X,Y,\Phi,\check{\Phi}), F_0$, we have fixed a choice of maximally split maximal torus for each prime power $q$, and that we have fixed a simple system $\pi \subset \Phi$ (such that $F_0(\pi) = \pi$) so that a rational Borel subgroup $B \supset T$ is determined. For each subset $J \subset \pi$ such that $F_0(J) = J$ we let $P_J$ denote the corresponding standard parabolic subgroup containing $B$, and $L_J$ the unique Levi subgroup of $P_J$ containing $T$. Since both $T$ and $B$ are $F$-stable, so are $P_J$ and $L_J$. As mentioned above the $G^F$-orbits of $F$-stable Levi subgroups conjugate to $L_J$ are parametrised by the $F$-classes of $W_{L_J}$. We have thus parametrised all $F$-stable Levi subgroups of $G$ (up to $G^F$-action) independently of $q$.

Now we move on to consider blocks. As mentioned a block is $F$-stable precisely when the corresponding triple $(L,c,\textgoth{L})$ is $F$-stable. I.e. its image under $F$ is in the same $G$-orbit. Any such $L$ is $G$-conjugate to some $L_J$ so we may assume that our triple is $(L_J,c,\textgoth{L})$. Since the $F$-action on unipotent classes is independent of $q$, the same is true of nilpotent orbits, since Springer morphisms respect the $F$-action. So, in order to parametrise the $F$-stable blocks independently of $q$ it just remains to parametrise the irreducible, $L$-equivariant, $\bar{\Q}_l$-local systems independently of $q$. But, as mentioned earlier, these are naturally parametrised by the elements of $\Irr A(u)$, for any $u$ such that $f(u) \in c$. If we choose $u$ to be a split element then we can thus obtain such a $q$-independent parametrisation, analogous to the parametrisation of unipotent classes of $G^F$ considered earlier. In the special case that split elements do not exist we may, by our previous discussion in Section \ref{intro}, choose $u$ to be from a $G^F$-class corresponding to a fixed $q$-independent label, which is sufficient for the current task. Note that in all cases $F$ acts trivially on $A(u)$. (For $u$ split this is clear; for the other case see \cite{Kawa3}, 1.2.1), and thus preserves the isomorphism classes of the corresponding local systems. In summary we have the following, which holds for split or non-split groups.

\begin{lemma} \label{blocksqindep}

The $F$-stable blocks are parametrised independently of $q$.\end{lemma}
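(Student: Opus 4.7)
The plan is to show that each of the three data defining a cuspidal triple $(L,c,\textgoth{L})$, together with the $F$-action on such triples, admits a $q$-independent description; taking $G$-orbits of $F$-fixed triples then yields the desired parametrisation of the $F$-stable blocks.

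First, for the Levi subgroup: every $F$-stable Levi of $G$ is $G$-conjugate to a standard Levi $L_J$ attached to an $F_0$-stable subset $J\subset\pi$, and the $G^F$-orbits of $F$-stable Levis conjugate to $L_J$ are labelled by the $F$-classes of $W_{L_J}$. Both the set of admissible $J$ and the group $W_{L_J}$ together with the automorphism induced by $F_0$ depend only on the root datum and $F_0$, so this piece of the parametrisation is immediately $q$-independent; passing to $G$-orbits (as required for blocks, rather than $G^F$-orbits) is a purely combinatorial collapse and equally $q$-independent.

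Next, for the nilpotent orbit $c\subset\l=\Lie(L_J)$: the root datum of $L_J$ inherits the torsion-freeness of $X/\Z\Phi$, so I can apply the discussion of Section \ref{intro} verbatim to $L_J$. Spaltenstein's map together with a Springer morphism compatible with $F$ yield a $q$-independent labelling of the $F$-stable nilpotent $\Ad(L_J)$-orbits of $\l$, with an induced $F$-action on labels coming from $F_0$ acting on the reference Levi in characteristic zero.

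Finally, for the local system $\textgoth{L}$ on $c$: I fix a representative $u$ of $c^F$ to be a split element of $L_J^F$, or, when split elements do not exist, to lie in the $L_J^F$-class corresponding to a fixed $q$-independent label as in Section \ref{intro}. With this choice the component group $A(u)$ is in canonical bijection with the analogous group $A(u')$ in the characteristic-zero reference Levi, so $\Irr A(u)$---which by the generalised Springer correspondence parametrises the irreducible $L_J$-equivariant local systems on $c$---is identified $q$-independently, and $F$ acts trivially on it (for $u$ split this is by definition, and for the $E_8$-exception this is \cite{Kawa3}, 1.2.1). Cuspidality is a geometric condition on $(L_J,c,\textgoth{L})$ over $\ov{\F}_q$, intrinsic to the data and independent of the $\F_q$-structure, so it cuts out a $q$-independent subset of triples. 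The main obstacle I anticipate is verifying that the identification of $\Irr A(u)$ with the local systems used in Lusztig's block decomposition agrees with the one coming from the split-element recipe above; once this compatibility is in hand, passage to $F$-stable $G$-orbits is a routine combinatorial operation.
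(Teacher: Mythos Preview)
Your proposal is correct and follows essentially the same approach as the paper. The paper's argument, which appears as discussion preceding the lemma statement rather than as a formal proof afterward, proceeds identically: standard Levis $L_J$ for $F_0$-stable $J\subset\pi$, Spaltenstein's map (via a Springer morphism) for the nilpotent orbit $c$, and a split element (or a fixed $q$-independent representative in the $E_8$ exception) to identify $\Irr A(u)$ with the local systems, together with the observation that $F$ acts trivially on $A(u)$ in either case.
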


%is this precise enough?

% fact the L cong to L_J: well let P \suupset L. Then send P to P_J. Then take the image of L and send it to L_J, as any two Levis in a para are conj.

% remember this issue of split elements for non-split groups still hanging over

% since I get split and non-split mixed up would be a good idea to search these words to check they are used correctly

For the remainder of this subsection we assume that $G$ has a split $\F_q$-structure. Define, for $\i, \i'$ in the same $F$-stable block, \[\omega_{\i, \i'} = |W_L|^{-1} q^{-\codim C/2 -\codim C'/2 + \dim \t}\sum_{w \in W_L} \Tr(w, V_{\i}) \Tr(w, V_{\i'}) \frac{|G^F|}{|\mathcal{Z}^{\circ F}_{L_w}|}\]

\noindent where $C = \support(\i)$ and $C' = \support(\i')$. Set $\omega_{\i, \i'} = 0$ if $\i, \i'$ are in different blocks (cf. \cite{Lus}, \S 6.5). Also define \[\al_{\i, \i'} = \sum_{w \in W_L} \Tr(w, V_{\i}) \Tr(w, V_{\i'}) |\mathcal{Z}^{\circ F}_{L_w}|.\]

One may check, using the relations from \cite{Lus}, \S \S 6.5, 6.6, that \[ \sum_{x \in \gl_{\nil}^F}  \ov{\mathcal{X}_{\i}(x)}	\mathcal{X}_{\i'}(x) = \omega_{\i, \i'}.\]    

\noindent It follows that the class functions $(\gamma_{N})_{\textgoth{I}_0}$ are mutually orthogonal. We may therefore write \[ \la \gamma_{N}, \gamma_{N} \ra = \sum_{\textgoth{I}_0} \left\la (\gamma_{N})_{\textgoth{I}_0}, (\gamma_{N})_{\textgoth{I}_0} \right\ra,\]

\noindent summing over the $F$-stable blocks. Each summand may be written as follows. \[  |G^F|^{-1}   \sum_{\i, \i_1, j, j_1 \in \textgoth{I}_0}  q^{f'(\i, \i_1)  +  f'(j, j_1)}   |W_L|^{-2}   \ov{\al_{\i, \hat{\i}_1}}   \al_{j, \hat{j}_1}   \omega_{\i_1, j_1} \]   \begin{equation} \label{notprincipal} \times \sum_{\i', j' \in \textgoth{I}_0}    {P'}_{\i', \i} \ov{ \mathcal{Y}_{\i'}(-N')       }  {P'}_{j', j} \mathcal{Y}_{j'}(-N'). \end{equation}

From now on we will assume that $X/\Z \Phi$ is torsion-free so that we may begin talking about polynomials in $q$ as in Section \ref{intro}. $|\mathcal{Z}^{\circ F}_{L_w}|$ can be seen to be a polynomial in $q$ by, e.g., \cite[p.74]{Car2}.

% I think Car p.74 is telling us that $|\mathcal{Z}^{\circ F}_{L_w}|$ is a poly in q: the double effect of circ and F resolves the ambiguity, seemingly.

\begin{lemma} \label{qpoly}   Under the assumptions of Theorem \ref{GenThm}, $\dim\End_{\bar{\Q}_lG^F} \Gamma_N$ is a polynomial in $q$ with rational coefficients, and, for each block $\textgoth{I}_0$, \[|G^F|  \left\la (\gamma_{N})_{\textgoth{I}_0}, (\gamma_{N})_{\textgoth{I}_0} \right\ra \] \noindent is a Laurent polynomial in $q$ with rational coefficients. \end{lemma}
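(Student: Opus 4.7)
The plan is to expand the blockwise expression $|G^F| \la (\gamma_{N})_{\textgoth{I}_0}, (\gamma_{N})_{\textgoth{I}_0} \ra$ using formula (\ref{notprincipal}), analyse each factor as a function of $q$, and then apply the argument $\clubsuit$ of Section \ref{Kawanaka} to the sum over blocks to obtain the polynomiality of the dimension of the endomorphism algebra.

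The ingredients of (\ref{notprincipal}) break up as follows. The order $|W_L|$ and the coefficients $P'_{\i', \i}$ depend only on $\textgoth{I}_0$ and are thus independent of $q$. The centraliser orders $|\mathcal{Z}^{\circ F}_{L_w}|$ are polynomials in $q$, as noted just before the lemma statement; consequently so is $\al_{\i, \hat{\i}_1}$, being a $\Z$-linear combination of these orders. The group order $|G^F|$ is a polynomial in $q$ by Chevalley's formula. The values $\mathcal{Y}_{\i'}(-N')$, by their construction in \cite{Lus}, \S\S 6.4--6.6, are powers of $q^{1/2}$ times $q$-independent scalars; the factor $\omega_{\i_1, j_1}$ contributes the further half-integer power $q^{-\codim C/2 - \codim C'/2 + \dim \t}$; and the prefactors $q^{f'(\i, \i_1) + f'(j, j_1)}$ are in general powers of $q^{1/2}$.

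Substituting these observations into (\ref{notprincipal}) and multiplying through by $|G^F|$ produces an expression which is a priori a Laurent polynomial in $q^{1/2}$ with rational coefficients. However, the fourth root of unity $\zeta$ and its conjugate $\ov{\zeta}$ arising from Lusztig's formula cancel inside the inner product, so $\la (\gamma_{N})_{\textgoth{I}_0}, (\gamma_{N})_{\textgoth{I}_0} \ra$ lies in $\Q$ at every admissible $q$. Since there are infinitely many admissible prime powers $q=p^n$ with $n$ odd (for which $q^{1/2}$ is irrational), the terms involving odd powers of $q^{1/2}$ must vanish identically; this upgrades our Laurent polynomial in $q^{1/2}$ to one in $q$, establishing the blockwise claim.

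Summing over the $F$-stable blocks, which by Lemma \ref{blocksqindep} are parametrised independently of $q$, yields $|G^F| \la \gamma_N, \gamma_N \ra$ as a Laurent polynomial in $q$ with rational coefficients. Since $\la \gamma_N, \gamma_N \ra = \dim \End_{\bar{\Q}_l G^F} \Gamma_N$ is a nonnegative integer at every admissible $q$, multiplying through by a sufficiently high power of $q$ to absorb any negative exponents and then invoking $\clubsuit$ produces the desired polynomial in $q$ with rational coefficients. I expect the main obstacle to be verifying the precise $q^{1/2}$-dependence of $\mathcal{Y}_{\i'}(-N')$ and $\omega_{\i_1, j_1}$, which requires a careful unwinding of the normalisations in \cite{Lus}, \S\S 6.4--6.6; once those are in hand, the remainder of the argument is essentially formal.
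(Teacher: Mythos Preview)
Your overall strategy---analyse each factor of (\ref{notprincipal}), assemble a Laurent polynomial, then invoke $\clubsuit$---matches the paper's, but two of your claims about the individual factors are wrong, and one of them is exactly the subtle point the proof has to address.

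First, the assertion that the $P'_{\i',\i}$ ``depend only on $\textgoth{I}_0$ and are thus independent of $q$'' is false. In \cite{Lus}, p.~151, the $P'_{\i',\i}$ are defined in terms of the $\omega_{\i,j}$, and the latter manifestly depend on $q$ (they involve $|G^F|/|\mathcal{Z}^{\circ F}_{L_w}|$). The paper handles this by first showing that the $\omega_{\i,j}$ are polynomials in $q$---using $\clubsuit$ together with the fact from \cite{Lus}, \S 6.5, that each $\omega_{\i,j}$ is an integer---and then deducing the same for $P'_{\i',\i}$. This integrality input is what lets you avoid the $q^{1/2}$ bookkeeping entirely; you never need your irrationality-of-$q^{1/2}$ argument.

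Second, your description of $\mathcal{Y}_{\i'}(-N')$ as ``powers of $q^{1/2}$ times $q$-independent scalars'' is not accurate, and more importantly it sweeps under the rug the genuine content here. These values are determined by the action of $A(u)$ on the stalk $\textgoth{F}_u$ together with a choice of normalising scalar; the action is $q$-independent under the conventions of Section~\ref{intro}, but one must argue that the scalar can also be chosen independently of $q$. The paper does this by appealing to \cite{Geck2}, (2.2), where the scalar is pinned down by a choice of extension of the relevant character of $A(u)$ to $A(u)\langle F\rangle$. You flag this as the ``main obstacle'' in your last paragraph, but you do not actually resolve it, and your guessed form of the answer is wrong.
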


\begin{proof}

By Lemma \ref{blocksqindep} we may consider the contribution from each $F$-stable block independently. The second statement is clear with respect to the top line of (\ref{notprincipal}) if the $\omega_{\i, j}$ are polynomials in $q$ for any $\i, j \in \textgoth{I}_0$. But this follows from Argument $\clubsuit$, used with the fact (cf. \cite{Lus}, \S 6.5) that the $\omega_{\i, j}$ are integers. The corresponding statement for the $P_{\i, j}'$ then follows from the fact that they are defined (\cite{Lus}, p. 151) in terms of the $\omega_{\i, j}$. Now let $\i = (C, \textgoth{F}) \in \textgoth{I}_0$ and assume (as we may, since $G$ has a split $\F_q$-structure) that $C$ is $F$-stable, and let $u \in C^F$. Then $\mathcal{Y}_{\i}(u)$ is defined (cf. \cite{Lus}, \S 6) in terms of the action of $A(u)$ on the stalk $\textgoth{F}_u$ at $u$, and a certain choice of scalar multiple. Subject to the conventions of Section \ref{intro} $\textgoth{F}_u$ and the action of $A(u)$ on it are independent of $q$. So it remains to check that this scalar multiple can be chosen independently of $q$. One way of seeing this is via \cite{Geck2}, (2.2), where the scalar is uniquely determined by a choice of extension of the character of $A(u)$ corresponding to $\textgoth{F}$ to the semidirect product $A(u)\la F \ra$. Hence, the scalar can be chosen independently of $q$. The second statement then follows by applying Argument $\clubsuit$.\end{proof}

% Been having a lot of bother with figuring out whether the scalarmultiple involved in defining the Y_i matters.
% Decided it doesn't, since in Geck 2.2, this scalar is introduced through a choice made at the level of A(u) with F-action, this set-up is indep of q
% would be nice to know more about this though

Define the degree of a Laurent polynomial $\sum \al_i t^i$ to be the largest integer $i$ such that $\al_i \not= 0$. Then by Lemma \ref{qpoly}, in order to prove Theorem \ref{GenThm} it is sufficient to consider   \[\deg \left( |G^F| \left\la (\gamma_{N})_{\textgoth{I}_0}, (\gamma_{N})_{\textgoth{I}_0} \right\ra \right)\]

\noindent for the various blocks.  The following lemma describes some properties of the degrees of polynomials involved in (\ref{notprincipal}) in the case that $\textgoth{I}_0$ is the principal block.

\begin{lemma}  \label{principallemma}

Let $\i, \i', \i_0$ belong to the principal block, with $\support(\i_0)$ equal to the regular nilpotent orbit, and let $n = \rank G$. Then the following hold.

\begin{enumerate}[{\em (i)}]

  \item $\i_0$ is unique with this property.
  
	\item $\deg \omega_{\i,\i'} \le (\dim \support(\i) + \dim \support(\i'))/2$, with equality if $\i = \i'$.
	
	\item $\deg \al_{\i,\i'} \le n$, with equality if, and only if,  $\i = \i'$.
	
	\item $\sum_{\i' \in \textgoth{I}_0}   {P'}_{\i', \i} \mathcal{Y}_{\i'}(-N') = 1$ if $\i=\i_0$.  \label{Ysum}
	
	\item $f'(\i',\i) \le f'(\i_0,\i)$, with equality if, and only if, $\i' = \i_0$.  \end{enumerate}   \end{lemma}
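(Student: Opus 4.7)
The plan is to handle parts (i)--(v) in turn, exploiting that the principal block corresponds to $L = T$, so $W_L = W$, $\mathcal{Z}^{\circ}_{L_w}$ is simply the $w$-twisted maximal torus $T_w$, and the parametrisation of $\textgoth{I}_0$ by $\Irr W$ is given by the ordinary Springer correspondence.

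For (i), I would invoke the standard fact that the pair in $\textgoth{I}_0$ with support $C_{\reg}$ is unique: the Springer fibre at a regular nilpotent element is a single point, so $A(u_{\reg})$ acts trivially on its top cohomology, and hence only the trivial character of $A(u_{\reg})$ gives rise to an element of $\textgoth{I}_0$ supported on $C_{\reg}$. Statement (v) then follows by direct substitution into the formula for $f'$: one checks $f'(\i_0, \i) - f'(\i', \i) = (\dim \support(\i_0) - \dim \support(\i'))/2 \ge 0$ since the regular orbit has maximal dimension, and equality forces $\support(\i') = C_{\reg}$, whence $\i' = \i_0$ by (i).

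For (ii) and (iii), I would use that $|T_w^F|$ is a monic polynomial in $q$ of degree $n$, while $|G^F|$ is monic of degree $\dim G$. In (iii), the leading coefficient of $\al_{\i,\i'}$ is $\sum_{w \in W}\Tr(w, V_\i)\Tr(w, V_{\i'}) = |W|\la V_\i, V_{\i'}\ra_W$, which by orthogonality of Weyl-group characters equals $|W|$ when $\i = \i'$ and vanishes otherwise, giving (iii). For (ii), the $q$-prefactor in the definition of $\omega_{\i,\i'}$ contributes $-\codim C/2 - \codim C'/2 + n$ to the degree, while $|G^F|/|T_w^F|$ contributes at most $\dim G - n$; these sum to $(\dim C + \dim C')/2$ as an upper bound, and equality when $\i = \i'$ follows by the same orthogonality argument since the leading coefficient of the $w$-sum is then nonzero.

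For (iv), I would appeal to the relation $\mathcal{X}_\i = \sum_{\i'} P'_{\i', \i}\mathcal{Y}_{\i'}$ between the two bases of nilpotently supported functions in \cite{Lus}, \S\S 6.5--6.6. For $\i_0 = (C_{\reg}, \bar{\Q}_l)$, the intersection cohomology complex on the closure $\ov{C_{\reg}} = \gl_{\nil}$ with trivial coefficients is (a shift of) the constant sheaf, so $\mathcal{X}_{\i_0}$ coincides, up to the normalisations fixed in Section \ref{intro}, with the constant function $1$ on $\gl_{\nil}^F$; evaluating at the nilpotent element $-N'$ then gives $1$. I expect this identification to be the main obstacle: one must track Lusztig's scalar conventions carefully to confirm that $\mathcal{X}_{\i_0}$ really takes the value $1$ everywhere, whereas the other parts reduce to either $W$-character orthogonality or to the uniqueness assertion (i).
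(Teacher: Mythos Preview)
Your treatment of (ii), (iii), and (v) matches the paper's: both reduce to the fact that $|T_w^F|$ is monic of degree $n$ together with orthogonality of irreducible $W$-characters, and (v) is immediate from maximality of $\dim C_{\reg}$. For (i) the paper argues more directly that $C_G(u)$ is connected for regular $u$ (\cite{Car2}, Proposition 5.1.6), so $A(u)=1$ and there is only one local system on $C_{\reg}$ at all; your Springer-fibre argument is also valid and, incidentally, is closer to what the paper uses later when it revisits this lemma in the disconnected-centre setting (Theorem~\ref{DThm}), where $A(u)$ need not be trivial.

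The genuine difference is in (iv). The paper does not identify the sum with $\mathcal{X}_{\i_0}(-N')$; instead it recognises it as $Q_1(-N')$ with $q$ replaced by $q^{-1}$, where $Q_\chi = |W|^{-1}\sum_w \chi(w)Q_{T_w}$, via the relation $Q_\i=\sum_j P_{j,\i}\mathcal{Y}_j$ (note: unprimed $P$), and then invokes the standard identity $Q_1=1$ from \cite{DiMi}. This route sidesteps precisely the obstacle you anticipate: there is no need to track Lusztig's scalar conventions on $\mathcal{X}_{\i_0}$, nor to argue that the IC complex on the (singular) variety $\gl_{\nil}$ with constant coefficients is the shifted constant sheaf. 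Your geometric heuristic is morally why $Q_1=1$, but turning it into a proof would require exactly the normalisation bookkeeping you flag, plus a rational-smoothness statement for the nilpotent cone; and beware that the transition matrix from $\{\mathcal{Y}_\i\}$ to $\{\mathcal{X}_\i\}$ is $P$, not $P'$, so the relation you cite needs the $q\mapsto q^{-1}$ adjustment before it matches the sum appearing in (iv).
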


\begin{proof}

If $u$ is regular then $A(u) = 1$ since $C_G(u)$ is connected (\cite{Car2}, Proposition 5.1.6). This shows that the regular nilpotent class can only appear in one element of $\textgoth{I}$. Moreover, it must be in the principal block, by the Springer correspondence (cf.  \cite{Car2}, \S 12.6). This proves (i). (ii) is equivalent to showing that \begin{equation}  \label{torusineq}  \deg \sum_{w \in W_L} \Tr(w, V_{\i}) \Tr(w, V_{\i'}) \frac{|G^F|}{|\mathcal{Z}^{\circ F}_{L_w}|} \le \dim G - n,\end{equation}

\noindent with equality if $\i = \i'$.  Since we are in the principal block, $\mathcal{Z}^{\circ F}_{L_w} = T^F_w$, and its order is always a polynomial in $q$ of degree $n$. (See, e.g., \cite{Car2}, Chapter 2.) Then the required statement is an elementary exercise in character theory and properties of sums of rational functions. (iii) follows by a similar argument.

Since $\textgoth{I}_0$ is the principal block we may use the theory of Green functions (as opposed to generalised Green functions), and for this we refer to \cite{Sho3}, \S 5. We may define related class functions associated with irreducible characters of the Weyl group as follows. For $\chi \in \Irr(W)$, let \[ Q_{\chi} = |W|^{-1} \sum_{w \in W} \chi (w) Q_{T_w}.\]

\noindent Then, for $\i \in \textgoth{I}_0$, the principal block, we have \[Q_{\i}  =  \sum_{j \in \textgoth{I}_0}  P_{j, \i} \mathcal{Y}_j,\]

% check over this, with reference to Ross's comment.

\noindent where $\chi \leftrightarrow \i$ is the Springer correspondence. Since the element of $\Irr (W)$ corresponding to $\i_0$ by the Springer correspondence is the trivial character, the expression in (iv) may be written as $Q_1(-N')(q^{-1})$. But $Q_1 = 1$ by, e.g., \cite{DiMi}, Proposition 12.13.

(v) follows from the fact that the dimension of the regular nilpotent orbit is strictly greater than that of the others. \end{proof}

We may now deduce the following.

\begin{proposition} \label{principaldegree} Let $\textgoth{I}_0$ be the principal block. Then \[ \deg \left( |G^F| \left\la (\gamma_{N})_{\textgoth{I}_0}, (\gamma_{N})_{\textgoth{I}_0} \right\ra \right) = \dim G + \dim C_G(u).\] \end{proposition}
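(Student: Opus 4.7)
The plan is to carry out a term-by-term degree estimate of the sum in (\ref{notprincipal}), identify the unique tuple $(\i,\i_1,j,j_1)$ saturating the bound, and verify the leading coefficient is nonzero. First I would factor the inner sum over $\i',j'$ as $A(\i)B(j)$, where $A(\i) := \sum_{\i'} P'_{\i',\i} \overline{\mathcal{Y}_{\i'}(-N')}$ and $B(j)$ is defined analogously. Writing $n := \rank G$, $\t := \Lie T$, $d(\i) := \dim\support(\i)$, and $d_0 := \dim G - n$ for the dimension of the regular nilpotent class, a short calculation using $\dim\Ad(G)\cdot N = \dim G - \dim C_G(u)$ and $\dim(\gl/\t) = \dim G - n$ gives
\[ f'(\i,\i_1) + f'(j,j_1) = \tfrac12\bigl(d(\i)+d(j) - d(\i_1) - d(j_1)\bigr) + \dim C_G(u) - n.\]

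Combined with Lemma \ref{principallemma}(ii) and (iii), the $q$-degree of a generic summand (ignoring $A(\i)B(j)$) is bounded above by $\tfrac12(d(\i)+d(j)) + n + \dim C_G(u)$: the $d(\i_1), d(j_1)$ contributions cancel between the $f'$ exponent and the bound on $\omega_{\i_1,j_1}$, while the two $\al$-factors contribute $n$ each. For $(\i,j) = (\i_0,\i_0)$, Lemma \ref{principallemma}(iv) gives $A(\i_0) = B(\i_0) = 1$, and part (i) identifies $d(\i_0) = d_0$; the bound then evaluates to $d_0 + n + \dim C_G(u) = \dim G + \dim C_G(u)$. This bound is attained by taking $\i_1 = j_1 = \i_*$, where $\i_*$ denotes the unique element with $\hat{\i}_* = \i_0$, since then $\al_{\i_0,\hat{\i}_1} = \al_{\i_0,\i_0}$ saturates (iii) and $\omega_{\i_*,\i_*}$ saturates (ii) simultaneously.

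To rule out equal or larger contributions from tuples with $(\i,j) \ne (\i_0,\i_0)$, I would prove $\tfrac12(d(\i)+d(j)) + \deg A(\i) + \deg B(j) < d_0$. Part (i), combined with the evenness of nilpotent orbit dimensions, gives $d(\i) \le d_0 - 2$ whenever $\i \ne \i_0$; the needed control on $\deg A(\i)$ comes from the same identification underlying the proof of (iv), namely that $A(\i)$ is (essentially) the Green function $Q_{\chi_\i}(-N')$ with $q$ replaced by $q^{-1}$, together with the standard bound $\deg A(\i) \le (d_0 - d(\i))/2$, which is sharp only at $\i = \i_0$.

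Finally, I would check the top-degree coefficient does not vanish. Since $\chi_{\i_0}$ is the trivial character of $W_L = W$, the polynomial $\al_{\i_0,\i_0} = \sum_{w\in W}|T_w^F|$ is a positive sum of monic polynomials of degree $n$; the leading coefficient of $\omega_{\i_*,\i_*}$, proportional to $\sum_w \chi_{\i_*}(w)^2 |G^F|/|T_w^F|$, is a positive rational; and the prefactor $|W_L|^{-2}$ is positive. Since the saturating tuple $(\i_0,\i_*,\i_0,\i_*)$ is unique, no cancellation at top degree is possible. The principal technical obstacle in this plan is establishing the Green-function degree bound $\deg A(\i) \le (d_0 - d(\i))/2$; once that is in hand, the remaining estimates are a bookkeeping exercise.
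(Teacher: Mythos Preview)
Your approach is essentially the same as the paper's: identify the maximising tuple $(\i_0,\i_*,\i_0,\i_*)$, compute its degree via Lemma~\ref{principallemma}, and argue that every other tuple contributes strictly less. The paper does this more tersely, simply asserting that ``by Lemma~\ref{principallemma}, any deviation from this choice of $\i,\i_1,j,j_1$ gives rise to a polynomial of strictly lower degree,'' whereas you spell out the bookkeeping and separately check the leading coefficient (which the paper defers to a remark after the proof).

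The one point worth flagging is your ``principal technical obstacle,'' the bound $\deg A(\i) \le (d_0 - d(\i))/2$. This is stronger than you need and harder than necessary to prove. The identification you already invoke from the proof of Lemma~\ref{principallemma}(iv) gives $A(\i) = Q_{\chi_\i}(-N')$ with $q$ replaced by $q^{-1}$; since $Q_{\chi_\i}(-N')$ is a genuine polynomial in $q$, substituting $q \mapsto q^{-1}$ yields a Laurent polynomial of degree at most $0$. Hence $\deg A(\i) \le 0$ for every $\i$ in the principal block. Combined with $d(\i) \le d_0 - 2$ for $\i \ne \i_0$, this immediately gives $\tfrac12(d(\i)+d(j)) + \deg A(\i) + \deg B(j) \le \tfrac12(d(\i)+d(j)) < d_0$ whenever $(\i,j)\ne(\i_0,\i_0)$, which is exactly the strict inequality you want. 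This is what the paper is tacitly using; once you see it, the ``obstacle'' disappears and the rest is, as you say, bookkeeping.
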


\begin{proof}

We will show that the required degree is obtained by a careful choice of parameters $\i, \i_1, j, j_1$. We choose $\i = j = \i_0$, the unique parameter corresponding to the regular orbit, and then choose $\i_1 = j_1$ such that $\hat{\i}_1 = \i$. (This is a unique choice since $\ \hat{}\ $ is a bijection.) With this fixed choice we see, by Lemma \ref{principallemma}, that the degree is given by \[ 2 f'(\i_0, \i_1)  +  \deg \al_{\i_0, \hat{\i}_1}  + \deg \al_{\i_0, \hat{\i}_1}   +   \deg\omega_{\i_1, \i_1}\]\[= \dim G     +    \dim \support(\i_0) +n - \dim \Ad(G)N.\]

% stick with EITHER Ad(G)N OR \O_N

\noindent It is known that the regular orbit has dimension $\dim G - n$. It follows that we obtain the  required degree. To complete the proof, note that by Lemma \ref{principallemma}, any deviation from this choice of $\i, \i_1, j, j_1$ gives rise to a polynomial of strictly lower degree. \end{proof}

Furthermore, one can check (using (\ref{notprincipal})) that $|G^F| \la (\gamma_{N})_{\textgoth{I}_0}, (\gamma_{N})_{\textgoth{I}_0} \ra$ is monic. But this is not sufficient to prove Theorem \ref{GenThm} since it may be the case that the leading terms from non-principal blocks exceed or annihilate this contribution. We shall, however, show that this is impossible, by describing some of the features of non-principal blocks. The associated Levi subgroups will no longer be maximal tori and so we have \begin{equation}   \label{rankk} \deg |\mathcal{Z}^{\circ F}_{L_w}| = \rank L_w = \dim \t =: m < n.\end{equation}

\begin{lemma}  \label{npblocklemma}

Let $\i, \i'$ belong to the non-principal block $\textgoth{I}_0$. Then the following hold.

\begin{enumerate}[{\em (i)}]

  \item No element of $\textgoth{I}_0$ is supported by the regular orbit.
  
	\item $\deg \omega_{\i,\i'} \le (\dim \support(\i) + \dim \support(\i'))/2$, with equality if $\i = \i'$.
	
	\item $\deg \al_{\i,\i'} \le m$, with equality if, and only if  $\i = \i'$.  \end{enumerate}   \end{lemma}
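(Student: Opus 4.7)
The plan is to treat this as the non-principal-block analogue of Lemma \ref{principallemma} and to follow the same template, with the single numerical change that $|\mathcal{Z}^{\circ F}_{L_w}|$ is now a monic polynomial in $q$ of degree $m < n$ (by (\ref{rankk})) rather than of degree $n$.

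For part (i), I would argue that if $\support(\i)$ were the regular nilpotent orbit then, as in the proof of Lemma \ref{principallemma}(i), the fact that $A(u) = 1$ for regular $u$ forces $\textgoth{F}$ to be the trivial local system, and the Springer correspondence then places the resulting pair in the principal block; since $\textgoth{I}_0$ is assumed non-principal, no such $\i$ lies in it.

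For parts (ii) and (iii) the mechanism is orthogonality of characters of $W_L$. For (iii), $|\mathcal{Z}^{\circ F}_{L_w}|$ is a monic polynomial in $q$ of degree $m$ whose leading term $q^m$ is independent of $w$; hence
\[\al_{\i, \i'} = \sum_{w \in W_L} \Tr(w, V_\i)\,\Tr(w, V_{\i'})\,|\mathcal{Z}^{\circ F}_{L_w}|\]
is a polynomial of degree at most $m$ whose leading coefficient is $\sum_{w}\Tr(w, V_\i)\,\Tr(w, V_{\i'})$. Since irreducible characters of the relative Weyl groups arising in the generalised Springer correspondence are real-valued, the usual orthogonality relations give $|W_L|\,\delta_{\i,\i'}$, yielding both the degree bound and the ``if and only if'' equality. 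Part (ii) is identical in spirit: $|G^F|/|\mathcal{Z}^{\circ F}_{L_w}|$ is a monic polynomial of degree $\dim G - m$ with $w$-independent leading term $q^{\dim G - m}$, so the sum appearing in the definition of $\omega_{\i,\i'}$ has degree at most $\dim G - m$ and leading coefficient $|W_L|\,\delta_{\i,\i'}$; combined with the prefactor $q^{-\codim C/2 - \codim C'/2 + \dim \t}$, this yields $\deg \omega_{\i,\i'} \le (\dim C + \dim C')/2$, with equality forced at $\i = \i'$.

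The main subtlety I expect is verifying that the leading term of $|G^F|/|\mathcal{Z}^{\circ F}_{L_w}|$ is $q^{\dim G - m}$ \emph{uniformly in $w$}; without this $w$-independence, the orthogonality step would collapse and only an upper bound by $\dim G - m$, rather than a clean identification of the leading coefficient, could be extracted. This independence is routine from the polynomial-order formulas for $F$-stable reductive groups and tori, but it is the one point at which some genuine bookkeeping is required.
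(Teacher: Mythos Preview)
Your proposal is correct and follows exactly the route the paper intends: the paper's own proof of this lemma is the single line ``This follows from (\ref{rankk}) and the proof of Lemma \ref{principallemma}'', and you have accurately unpacked what that means, replacing $n$ by $m=\dim\t$ via (\ref{rankk}) and rerunning the character-orthogonality argument for $W_L$. The only cosmetic point is that $|G^F|/|\mathcal{Z}^{\circ F}_{L_w}|$ need only be treated as a ratio of monic polynomials (a rational function with leading behaviour $q^{\dim G-m}$), which is why the paper speaks of ``properties of sums of rational functions'' rather than polynomials; your degree/leading-coefficient analysis goes through unchanged in that setting.
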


\begin{proof}

This follows from (\ref{rankk}) and the proof of Lemma \ref{principallemma}.\end{proof}

We may now deduce, for a non-principal block $\textgoth{I}_0$, that the degree of a typical term of \[|G^F| \left\la (\gamma_{N})_{\textgoth{I}_0}, (\gamma_{N})_{\textgoth{I}_0} \right\ra\] \noindent with respect to (\ref{notprincipal}) is not greater than \[f'(\i, \i_1)  +  f'(j, j_1)  +  \deg \al_{\i, \hat{\i}_1}  + \deg \al_{j, \hat{\i}_1}   +   \deg\omega_{\i_1, j_1},\]

\noindent but this is strictly less than $\dim G + \dim C_G(u)$. This completes the proof of Theorem \ref{GenThm}.

\subsection{Groups with a non-split $\F_q$-structure}

In the previous subsection the analysis of (\ref{notprincipal}) was made easier by the fact that the geometric unipotent classes were fixed by the Frobenius action. (In particular it was possible to deduce useful information about the $\mathcal{Y}_{\i}$.) For non-split groups we may reduce to a situation where the only functions $\mathcal{Y}_{\i}$ that appear are such that $\support(\i)$ is $F$-stable. We do this by considering a transposed version of generalised Gelfand-Graev characters as follows (see \cite{Lus}, \S 7.5) . %(See also \cite[\S 2]{Geck2}.)

Let $C$ be an $F$-stable nilpotent orbit such that $\i = (C, \textgoth{F})$ belong to an $F$-stable block $\textgoth{I}_0$, and let $x_1, \dots, x_r$ be representatives for the $G^F$-classes in $C^F$. Let $a$ denote the order of $C_{G^F}(x_i)$, for some $i$. (Clearly $a$ is independent of $i$.) Also, let $a_i$ denote the order of $C_{G^F}(x_i)^F$. Then we define \[\gamma_{\i} = \sum_{i=1}^r aa_i^{-1} \mathcal{Y}_{\i} (x_i) \gamma_{x_i}.\]

We also have \[\gamma_{x_i} = a^{-1}\sum_{k} \ov{\mathcal{Y}_{k} (x_i)} \gamma_{k},\]

\noindent where the sum is taken over the $F$-fixed $k$ such that $\support(k) = C$. Hence we may write \begin{equation} \label{basischange} \left\la \gamma_{x_i}, \gamma_{x_i} \right\ra = a^{-2} \sum_{k,l} \ov{\mathcal{Y}_{k} (x_i)} \mathcal{Y}_{l} (x_i) \left\la \gamma_{k}, \gamma_{l} \right\ra.\end{equation}

In \cite{Lus} Lusztig has given a formula for the $\gamma_{\i}$, valid for split groups, which is analogous to the one in Theorem \ref{Lusthm}. Lusztig also hints at how to alter various formulas contained in \cite{Lus} to make them valid for non-split groups. This is carried out explicitly in \cite{DLM}, from which we borrow the following formula (\cite{DLM}, (6.1)): \[ \gamma_{\i} = \sum_{\i, \i_1 \in \textgoth{I}_0} q^{f'(\i,\i_1)} \zeta^{-1} a |W_L|^{-1} \sum_{w \in W_L} \Tr(wF, \tilde{V}_{\i}) \Tr(wF, \tilde{V}_{\hat{\i}_1}) |{\mathcal{Z}^{\circ wF}_{L}}| {P'}_{\i_0, \i} \e_{\i_1}\mathcal{X}_{\i_1}\]

Here, $\tilde{V}_{\i}$ are certain extensions of $V_{\i}$ to modules for the group $W_L\la F \ra$. In \cite{Lus4}, \S 2.2, they are chosen in an explicit and unique way, which is independent of $q$. Lusztig calls them the {\em preferred extensions}. By \cite{DLM}, Remark 3.6, the scalar $\e_{\i_1}= \pm 1$ is determined by a preferred extension and thus is also independent of $q$.

Thus, we have \[\left\la \gamma_{k}, \gamma_{l} \right\ra =   |G^F|^{-1}  \sum_{\i, \i_1, j, j_1 \in \textgoth{I}_0}  q^{f'(\i, \i_1)  +  f'(j, j_1)}   a^2 |W_L|^{-2}   \ov{\al_{\i, \hat{\i}_1}}   \al_{j, \hat{j}_1}   \omega_{\i_1, j_1} \]   \begin{equation} \label{notprincipalnonsplit} \times  \ov{\e_{\i_1}} {P'}_{k, \i} \e_{j_1} {P'}_{l, j}, \end{equation}

\noindent where \[\al_{\i, j} = \sum_{w \in W_L} \Tr(wF, \tilde{V}_{\i}) \Tr(wF, \tilde{V}_{\hat{\i}_1}) |{\mathcal{Z}^{\circ wF}_{L}}|\] \noindent this time.

By combining (\ref{basischange}) and (\ref{notprincipalnonsplit}) we obtain an expression for the dimension of a generalised Gelfand-Graev representation in which no function $\mathcal{Y}_{\i}$ appears unless $\support(\i)$ is $F$-stable. Theorem \ref{GenThm} now follows for non-split groups by applying the same argument as in Subsection \ref{proof} to this expression.

% is \ov over the correct bits in this and in the split version?

% the \Gamma_u etc in Lus are characters rather than reps though aren't they? Get the char/rep distinction correct!

% start with forumla in proof of (6.5) of D-L-M (space...)
% this means that we can avoid considering Y_i corresponding to non-F-stable C (i.e. write <\gamm_u, \gamm_u> in terms of <\gamm_i, \gamm_i>
% then the poly and degree question equiv to doing so on <\gamm_i, \gamm_i> (NOTE that we need to adress the silly scalar \e here too!)
% we then need to consider the relations of the form (2.3)(b) Geck to prove that it is monic.

\subsection{Groups with a disconnected centre}

In this subsection we consider what happens when we remove the assumption that $X/\Z \Phi$ is torsion-free. In this situation $Z(G)$ may have a disconnected centre. (In fact this occurs for $G=G(p^m)$ precisely when $X/\Z \Phi$ has no $p'$-torsion.) It should be clear from the discussion in Section \ref{intro} that the parametrisation of $F$-stable geometric unipotent classes is still independent of $q$. The difficulty arises when we try to parametrise, independently of $q$, the $G^F$-classes of rational points in some $C^F$, where $C$ is an $F$-stable geometric unipotent class. In fact this is impossible in general since even the number of $G^F$ classes in $C^F$ may depend on $q$. We now explain a way of getting around this difficulty.

% split ref: (See \cite[p. 590]{BeSp}.) (removed as a bit out of place here.)

First note that split elements still exist and are unique  up to $G^F$-conjugacy. (We may ignore the $E_8$ difficulty this time as there is only one isogeny class associated with this group and it has the property that $X/\Z \Phi$ is torsion-free.) For a given finite reductive group $G^F$, with data $(X,Y,\Phi,\check{\Phi}), F_0$ and prime power $q_0$, set $D = D(G^F)$ to be the set of prime powers $q_1$ such that the component groups of $F$-stable geometric classes of $G(q_1)$ are isomorphic to those of $G(q_0)$ (via Spaltenstein's map), and furthermore that the $F$-action respects these isomorphisms. Then if $f \in \Q[x]$ is a polynomial such that some quantity associated with $(X,Y,\Phi,\check{\Phi}), F_0$ is given by $f(q)$ for all $q \in D$, then we say that this quantity is a {\em polynomial in $q$ on $D$}.

For given data $(X,Y,\Phi,\check{\Phi}), F_0$ it should be possible to write down an explicit list of the possible sets $D$, for simple simply connected groups at least. The discussion on \cite{GoRo}, p. 8, would be a good starting point. We briefly illustrate what might happen by means of an example (\cite{GoRo}, Example 2.6). When $G=\SL_3$, with standard Frobenius map, and $u \in C$, where $C$ is the regular unipotent class, there are three possibilities for $D$, depending on the congruence of $q$ modulo $3$. When $q$ is a power of $3$, $A(u) = 1$. When $q$ is congruent to $1$ modulo $3$, $A(u) \cong \Z/3\Z$, with $F$ acting trivially. When $q$ is congruent to $2$ modulo $3$, $A(u) \cong \Z/3\Z$, but this time $F$ acts non-trivially.

We now state and prove the main result of this subsection.

\begin{theorem} \label{DThm}

Let $G$ be a connected reductive group, defined over $\F_q$, with root datum $(X,Y,\Phi,\check{\Phi})$ and Frobenius endomorphism $F$, and let $D = D(G^F)$ be as above. Let $u\in G_{\uni}^F$ and let $\Gamma_u$ be the corresponding generalised Gelfand-Graev representation. Then the dimension of the endomorphism algebra $\End_{\bar{\Q}_lG^F} \Gamma_u$ is a monic polynomial in $q$ on $D$ with rational coefficients. Moreover, its degree is given by the dimension of the centraliser $C_G(u)$. \end{theorem}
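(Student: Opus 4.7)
The plan is to reduce Theorem \ref{DThm} to a re-application of the machinery of Subsection \ref{proof}, once we establish that on $D$ all the combinatorial and representation-theoretic ingredients can still be indexed independently of $q$. The definition of $D$ was tailor-made so that the $F$-module structure on the component groups $A(u)$ of split representatives of $F$-stable geometric unipotent classes is the same for every $q\in D$. Thus, for each $F$-stable geometric class $C$, pick a split element $u\in C$ (which exists and is unique up to $G^F$-conjugacy, since we are not in the $E_8$ case excluded by the torsion-free hypothesis anyway); then the $G^F$-classes in $C^F$ correspond to $F$-conjugacy classes of $A(u)$, and this correspondence is $q$-independent on $D$. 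This gives an analogue of Proposition \ref{GGGRparam} and of Proposition \ref{centralisersarepoly}, the latter by the same Lusztig--Shoji argument, so that centraliser orders are polynomials in $q$ on $D$.

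Next I would extend Lemma \ref{blocksqindep}: an $F$-stable block corresponds to a $G$-orbit of cuspidal triples $(L,c,\textgoth{L})$ that is $F$-stable. The Levi $L$ may be taken to be standard (fixed independently of $q$) and the nilpotent $\Ad(L)$-orbit $c$ is parametrised via the Spaltenstein/Springer correspondence $q$-independently on $D$. The local system $\textgoth{L}$ is encoded by an irreducible character of $A(u')$ for a split $u'$ in $c$, and the $F$-action on this character group is precisely what is prescribed to be constant on $D$. Hence $F$-stable blocks, and the bijections $V_\i\leftrightarrow \i$ with $\Irr W_L$, remain $q$-independent on $D$.

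With this in place, every single ingredient of formulas (\ref{notprincipal}) and (\ref{notprincipalnonsplit}) now has the same $q$-independence on $D$ that it enjoyed under the connected-centre assumption: the integers $\omega_{\i,\i'}$ and hence the $P'_{\i',\i}$ are polynomials in $q$ on $D$ by Argument $\clubsuit$; the $\mathcal{Y}_\i(u)$ are determined by the $A(u)$-action on stalks together with the Geck scalar (\cite{Geck2}, (2.2)), both of which are constant on $D$; the preferred extensions $\tilde V_\i$ and the signs $\e_{\i_1}$ are chosen canonically and do not see $q$; and $|\mathcal{Z}^{\circ F}_{L_w}|$ (resp.\ $|\mathcal{Z}^{\circ wF}_L|$) is a polynomial in $q$ on all of $D$. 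Thus the proof of Lemma \ref{qpoly} carries over verbatim to show that $|G^F|\la\gamma_N,\gamma_N\ra$ is a Laurent polynomial in $q$ on $D$.

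For the degree and leading coefficient, Lemmas \ref{principallemma} and \ref{npblocklemma} and Proposition \ref{principaldegree} depend only on the Springer correspondence in the principal block (regular orbit $\leftrightarrow$ trivial character, $Q_1\equiv 1$), inequalities between orbit dimensions, and elementary character theory of $W_L$; none of these facts uses connectedness of $Z(G)$. So the principal block contributes the monic polynomial of degree $\dim G+\dim C_G(u)$, and the non-principal blocks contribute strictly lower degree, exactly as in Subsection \ref{proof}. The non-split case is handled as in Subsection~3.2 by passing through the $\gamma_\i$ and (\ref{basischange}), using the formula (\ref{notprincipalnonsplit}) of Digne--Lehrer--Michel; the change-of-basis matrix $(\mathcal{Y}_k(x_i))$ is constant on $D$ by our parametrisation, so the top-degree term survives and yields the asserted monic polynomial of degree $\dim C_G(u)$. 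The main thing to check carefully, and the only place I would expect to have to argue with any care, is that the extension/scalar choices entering $\gamma_\i$ in the non-split disconnected-centre case can be made uniformly on $D$, but this follows from the fact that preferred extensions depend only on the $W_L\la F\ra$-structure of the $V_\i$, which by the definition of $D$ is fixed.
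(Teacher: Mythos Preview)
Your proposal is essentially correct and follows the same route as the paper. Two points deserve more care, and the paper addresses both explicitly. First, your assertion that Lemmas \ref{principallemma} and \ref{npblocklemma} ``do not use connectedness of $Z(G)$'' is too quick: in the connected-centre proof, Lemma \ref{principallemma}(i) was obtained from $A(u)=1$ for regular $u$, which can fail here. The paper replaces this by observing that (i) is equivalent to there being only one Springer representation attached to the regular orbit in the principal block, which follows from Springer's construction; your parenthetical ``regular orbit $\leftrightarrow$ trivial character'' is the right replacement, but you should flag that the original argument is no longer available. Second, and more importantly, Lemma \ref{npblocklemma}(i) may actually be \emph{false} when $Z(G)$ is disconnected: the regular orbit can support local systems lying in non-principal blocks. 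The paper notes this and observes that parts (ii) and (iii) alone, together with $m<n$, already force the non-principal contribution to have strictly smaller degree; you should not claim the argument is ``exactly as in Subsection \ref{proof}'' without acknowledging this. The paper also remarks that $|\mathcal{Z}^{\circ F}_{L_w}|$ being a polynomial in $q$ is less immediate here (since $\mathcal{Z}_{L_w}$ need not be connected), referring to \cite{Car2}, pp.~73--74.
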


\begin{proof}

The proof essentially works in the same way as for groups with a connected centre. We will assume here, for simplicity, that $G$ has a split $\F_q$-structure, although the non-split generalisation carries over to this case, as before. First note that the statement of the theorem is meaningful, in the sense that the generalised Gelfand-Graev characters are naturally parametrised independently of $q \in D$, by the above discussion. Next, note that Lemmas \ref{blocksqindep} and \ref{qpoly} still hold in this situation. Indeed, the set $\textgoth{I}$ and its partition into blocks, as well as the $F$-action on it, only depends on $(X,Y,\Phi,\check{\Phi})$, $F_0$, and the component groups, together with their $F$-action. The analysis of (\ref{notprincipal}) required in the proof of Lemma \ref{qpoly} poses no new difficulty, although the reason why $|\mathcal{Z}^{\circ F}_{L_w}|$ is a polynomial in $q$ is somewhat deeper in this case (see, e.g., \cite{Car2}, pp.73,74).

The statement of Lemma \ref{principallemma} remains true here also, although establishing the truth of (i) requires a different argument since it may not be the case that $A(u)=1$ for a regular unipotent element any more. However, (i) is equivalent to there only being one Springer representation associated with the regular nilpotent orbit. But this is clear from Springer's construction (as described in, e.g., \cite{Car2}, \S 12.6). This fact also means that the special parameters chosen in the proof of Lemma \ref{principaldegree} may be chosen again in the current situation and thus this result, and the monic property, remain true too. Finally, Lemma \ref{npblocklemma}, (i) may not be true in the current situation, but (ii) and (iii) are true and these are, in fact, sufficient to deduce that polynomials associated with non-principal blocks have degree strictly less than $\dim G + \dim C_G(u)$.\end{proof}

% ingredients of proof:

% 2 statement makes sense since GGGRs param by $(X,Y,\Phi,\check{\Phi})$, F_0, the A(u) for u split in rational C and F-action on A(u).
% 3 poly in q on D (Lemma 3.4*): G^F, GGGRs, Gen Spr corr, gothI and block structure are determined by $(X,Y,\Phi,\check{\Phi})$, F_0, the A(u) for u split in rational C and F-action on A(u). (analyse (3)
% 4 get degree
% 5 show monic
% claim: stalk of loc sys fixed as we vary q, subject to convention that u varies with q.

% check references! (checked them on 14/7 but not thoroughly - sholud do this before submitting)

\end{document}